%*************************************************************
%*****    PREAMBLE
%*************************************************************

%!TEX TS-program =  pdflatex

%*************************************************************
%*****    DOCUMENT CLASS
%*************************************************************
\documentclass[reqno]{amsart}

%*************************************************************
%*****    PACKAGES
%*************************************************************

%----------------------------------------------------------------------
%% AMS group
%----------------------------------------------------------------------
\usepackage{amsmath}
\usepackage{amssymb}
\usepackage{amsfonts}
\usepackage{amsthm}

%----------------------------------------------------------------------
%% Fonts and alphabets (beware of conflicts)
%----------------------------------------------------------------------
\usepackage[utf8]{inputenc}
\usepackage[T1]{fontenc}
%\usepackage[varg]{txfonts}
%\let\mathbb=\varmathbb

%% Blackboard bold
%----------------------------------------------------------------------
\usepackage{dsfont}

%% Sans serif font
%----------------------------------------------------------------------
%\usepackage[scaled=.87]{sourcesanspro}

%% Typewriter font
%----------------------------------------------------------------------
%\usepackage[scaled=.87]{sourcecodepro}

%% Math alphabets
%----------------------------------------------------------------------
%\usepackage[%
%cal=cm,
%%scr=euler,
%%frak=euler
%]
%{mathalfa}

%----------------------------------------------------------------------
%% Algorithms
%----------------------------------------------------------------------
\usepackage{algorithmic,algorithm}
\usepackage{algorithm2e_extension}
\SetKwBlock{Repeat}{Repeat}{}
%\SetKwProg{atEach}{At each}{}{}
%\SetKwProg{SimulForEach}{for each}{ do simultaneously}{}
%\newcommand{\kwd}[1]{\textbf{#1}}

%----------------------------------------------------------------------
%% Colors
%----------------------------------------------------------------------
\usepackage[dvipsnames,svgnames]{xcolor}
%\colorlet{MyBlue}{DodgerBlue!75!Black}
%\colorlet{MyGreen}{DarkGreen!50!Black}

%----------------------------------------------------------------------
%% Figures and Graphics
%----------------------------------------------------------------------
\usepackage{subfigure}
\usepackage{tikz}

%----------------------------------------------------------------------
%% Miscellaneous
%----------------------------------------------------------------------
\usepackage{acronym}
\usepackage{paralist}
%\usepackage{wasysym}
%\usepackage{xspace}

%----------------------------------------------------------------------
%% References
%----------------------------------------------------------------------
\usepackage[numbers,sort&compress]{natbib}

%\def\bibsep{\smallskipamount}
%\def\bibhang{24pt}

%----------------------------------------------------------------------
%% Hyperlinks
%----------------------------------------------------------------------
\usepackage{hyperref}
\hypersetup{
colorlinks=true,
linktocpage=true,
%pdfstartpage=1,
pdfstartview=FitH,
breaklinks=true,
pdfpagemode=UseNone,
pageanchor=true,
pdfpagemode=UseOutlines,
plainpages=false,
bookmarksnumbered,
bookmarksopen=false,
bookmarksopenlevel=1,
hypertexnames=true,
pdfhighlight=/O,
%hyperfootnotes=true,
%nesting=true,
%frenchlinks,
urlcolor=Maroon,linkcolor=RoyalBlue!70!black,citecolor=DarkGreen!70!black, % <--- for screen
%urlcolor=black, linkcolor=black, citecolor=black, %pagecolor=black, % <--- for printing
%pagecolor=RoyalBlue,
pdftitle={},
pdfauthor={},
pdfsubject={},
pdfkeywords={},
pdfcreator={pdfLaTeX},
pdfproducer={LaTeX with hyperref}
}

% Hyperlink shortcuts

%----------------------------------------------------------------------
%% Backrefs
%----------------------------------------------------------------------
%\newcommand{\backrefnotcitedstring}{\relax} %(Not cited.)
%\newcommand{\backrefcitedsinglestring}[1]{(Cited on page~#1.)}
%\newcommand{\backrefcitedmultistring}[1]{(Cited on pages~#1.)}

%\RequirePackage[hyperpageref]{backref} % to be loaded after hyperref package 
%   \renewcommand{\backreftwosep}{ and~} % seperate 2 pages
%   \renewcommand{\backreflastsep}{, and~} % seperate last of longer list
%   \renewcommand*{\backref}[1]{} % Disable standard
%   \renewcommand*{\backrefalt}[4]{ % Detailed backref
%      \ifcase #1 %
%         \backrefnotcitedstring %
%      \or
%         \backrefcitedsinglestring{#2} %
%      \else
%         \backrefcitedmultistring{#2} %
%      \fi}

%*************************************************************
%*****    MACROS
%*************************************************************

%----------------------------------------------------------------------
%% Bold Symbols
%----------------------------------------------------------------------
\newcommand{\bA}{\mathbf{A}}
\newcommand{\bB}{\mathbf{B}}

\newcommand{\bH}{\mathbf{H}}
\newcommand{\bI}{\mathbf{I}}

\newcommand{\bM}{\mathbf{M}}

\newcommand{\bO}{\mathbf{O}}
\newcommand{\bP}{\mathbf{P}}
\newcommand{\bQ}{\mathbf{Q}}

\newcommand{\bU}{\mathbf{U}}
\newcommand{\bV}{\mathbf{V}}
\newcommand{\bW}{\mathbf{W}}
\newcommand{\bX}{\mathbf{X}}
\newcommand{\bY}{\mathbf{Y}}

\newcommand{\bx}{\mathbf{x}}
\newcommand{\by}{\mathbf{y}}
\newcommand{\bz}{\mathbf{z}}

%----------------------------------------------------------------------
%% Fields
%----------------------------------------------------------------------

\newcommand{\C}{\mathbb{C}}
\newcommand{\R}{\mathbb{R}}

\newcommand{\N}{\mathbb{N}}

%----------------------------------------------------------------------
%% Operators
%----------------------------------------------------------------------

\DeclareMathOperator{\argdot}{\cdotp}

\DeclareMathOperator{\bigoh}{\mathcal O}
\DeclareMathOperator{\card}{card}

\DeclareMathOperator{\ex}{\mathbb{E}}
\DeclareMathOperator{\given}{\mid}
\DeclareMathOperator{\grad}{\nabla}

\DeclareMathOperator{\one}{\mathds{1}}

\DeclareMathOperator{\tr}{tr}

\DeclareMathOperator{\eff}{eff}

%----------------------------------------------------------------------
%% Shorthands and aliases
%----------------------------------------------------------------------

\newcommand{\dd}{\:d}

\newcommand{\eps}{\varepsilon}

\newcommand{\filter}{\mathcal{F}}
\newcommand{\from}{\colon}

\newcommand{\herm}{\boldsymbol{\mathcal{H}}}

\newcommand{\mg}{\succ}
\newcommand{\mgeq}{\succcurlyeq}

%----------------------------------------------------------------------
%% Delimiters
%----------------------------------------------------------------------
\newcommand{\abs}[1]{\left\lvert #1 \right\rvert}
\newcommand{\smallabs}[1]{\lvert #1 \rvert}

\newcommand{\norm}[1]{\left\| #1 \right\|}
\newcommand{\smallnorm}[1]{\| #1 \|}

%----------------------------------------------------------------------
%% Math display and formatting
%----------------------------------------------------------------------
\newcommand{\dis}{\displaystyle}
\newcommand{\txs}{\textstyle}

\newcommand{\insum}{\sum\nolimits}

%*************************************************************
%*****    EDITING
%*************************************************************
%\usepackage{todonotes}
%\setlength{\marginparwidth}{2.1cm}
%\usepackage{soul}
%\setstcolor{red}
%\sethlcolor{SkyBlue}
%
%\newcommand{\needref}{{\color{red} \upshape [\textbf{??}]}\xspace}
%\newcommand{\attn}{{\color{red} \upshape [\textbf{!!}]}\xspace}

%*************************************************************
%*****    ENVIRONMENTS
%*************************************************************

%----------------------------------------------------------------------
%% Theorem-like
%----------------------------------------------------------------------
\theoremstyle{plain}
\newtheorem{theorem}{Theorem}
\newtheorem{corollary}[theorem]{Corollary}
\newtheorem*{corollary*}{Corollary}
\newtheorem{lemma}[theorem]{Lemma}
\newtheorem{proposition}[theorem]{Proposition}

%----------------------------------------------------------------------
%% Definition-like
%----------------------------------------------------------------------
\theoremstyle{definition}

\newtheorem*{definition*}{Definition}
\newtheorem{assumption}{Assumption}

%----------------------------------------------------------------------
%% Remark-like
%----------------------------------------------------------------------
\theoremstyle{remark}
\newtheorem{remark}{Remark}
\newtheorem*{remark*}{Remark}

%----------------------------------------------------------------------
%% Numbering
%----------------------------------------------------------------------
\numberwithin{equation}{section}
\numberwithin{theorem}{section}
\numberwithin{remark}{section}
\numberwithin{example}{section}

%*************************************************************
%*****    PAPER-SPECIFIC STUFF
%*************************************************************

%----------------------------------------------------------------------
%% Games
%----------------------------------------------------------------------
\newcommand{\play}{\mathcal{K}}

\newcommand{\pay}{u}

\newcommand{\obj}{f}
\newcommand{\objmean}{F}
\newcommand{\strat}{\boldsymbol{\mathcal{X}}}

\newcommand{\eq}{\bX^{\ast}}

%----------------------------------------------------------------------
%% Mirror Descent
%----------------------------------------------------------------------

%----------------------------------------------------------------------
%% Sundries
%----------------------------------------------------------------------

\newcommand{\temp}{\tau}
\newcommand{\discrate}{r}
\newcommand{\step}{\gamma}

\newcommand{\vbound}{V}
\newcommand{\pf}{\mathcal{Z}}

\newcommand{\tx}{M}
\newcommand{\rx}{N}
\newcommand{\rate}{R}
\newcommand{\pmax}{P}
\newcommand{\pot}{\objmean}

%*************************************************************
%*****    AUTHOR-SPECIFIC COMMANDS
%*************************************************************

%----------------------------------------------------------------------
%% PM
%----------------------------------------------------------------------

%----------------------------------------------------------------------
%% BG
%----------------------------------------------------------------------

%*************************************************************
%*****    MAIN DOCUMENT
%*************************************************************
\begin{document}

%*************************************************************
%*****    FRONT MATTER AND METADATA
%*************************************************************

%----------------------------------------------------------------------
%%% TITLE & AUTHORS
%----------------------------------------------------------------------
\title
[Stochastic Semidefinite Programming via Stochastic Approximation]
{A Stochastic Approximation Algorithm for Stochastic Semidefinite Programming}

\author[B.~Gaujal and P. ~Mertikopoulos]{Bruno Gaujal and Panayotis Mertikopoulos}
%\author[P.~Mertikopoulos]{Panayotis Mertikopoulos}

\address[B.~Gaujal]
{Inria\\
and
Univ. Grenoble Alpes, LIG, F-38000 Grenoble, France}
\email{\href{mailto:bruno.gaujal@inria.fr}{bruno.gaujal@inria.fr}}
\urladdr{\url{http://mescal.imag.fr/membres/bruno.gaujal}}

\address[P.~Mertikopoulos]
{CNRS (French National Center for Scientific Research), LIG, F-38000 Grenoble, France\\
and
Univ. Grenoble Alpes, LIG, F-38000 Grenoble, France}
\email{\href{mailto:panayotis.mertikopoulos@imag.fr}{panayotis.mertikopoulos@imag.fr}}
\urladdr{\url{http://mescal.imag.fr/membres/panayotis.mertikopoulos}}

%----------------------------------------------------------------------
%%% KEYWORDS
%----------------------------------------------------------------------
% \begin{IEEEkeywords}
% These;
% are;
% the;
% keywords.
% \end{IEEEkeywords}

%----------------------------------------------------------------------
%%% ACRONYMS
%----------------------------------------------------------------------
\newacro{KKT}{Karush\textendash Kuhn\textendash Tucker}
\newacro{MSE}{mean squared error}
\newacro{AIMD}{additive increase, multiplicative decrease}
\newacro{5G}{fifth generation}
\newacro{SISO}{single-input and single-output}
\newacro{MIMO}{multiple-input and multiple-output}
\newacro{MUI}{multi-user interference-plus-noise}
\newacro{MAC}{multiple access channel}
\newacro{PMAC}{parallel multiple access channel}
\newacro{CSI}{channel state information}
\newacro{CSIT}{channel state information at the transmitter}
\newacro{BS}{base station}
\newacro{TDD}{time-division duplexing}
\newacro{CDMA}{code division multiple access}
\newacro{FDMA}{frequency division multiple access}
\newacro{DSL}{digital subscriber line}
\newacro{SIC}{successive interference cancellation}
\newacro{SUD}{single user decoding}
\newacro{SINR}{signal-to-interference-and-noise ratio}
\newacro{WF}{water-filling}
\newacro{IWF}{iterative water-filling}
\newacro{SWF}{simultaneous water-filling}
\newacro{iid}[i.i.d.]{independent and identically distributed}
\newacro{OFDMA}{orthogonal frequency-division multiple access}
\newacro{DXL}{discounted exponential learning}
\newacro{AMXL}[MXL-a]{asynchronous matrix exponential learning}
\newacro{EXL}[MXL-e]{eigen-based exponential learning}
\newacro{FCC}{Federal Communications Commission}
\newacro{NTIA}{National Telecommunications and Information Administration}
\newacro{GAO}{General Accounting Office}
\newacro{QoE}{quality of experience}
\newacro{QoS}{quality of service}
\newacro{OFDM}{orthogonal frequency division multiplexing}\acused{OFDM}
\newacro{EW}{exponential weight}
\newacro{OGD}{online gradient descent}
\newacro{OMD}{online mirror descent}
\newacro{APT}{asymptotic pseudotrajectory}
\newacro{ICT}{internally chain transitive}
\newacro{EPA}{extended pedestrian A}
\newacro{EVA}{extended vehicular A}
\newacro{ETU}{extended typical urban}
\newacro{PPP}{Poisson point process}
\newacro{DL}{downlink}
\newacro{UL}{uplink}
\newacro{SU}{synchronous updates} 
\newacro{AU}{asynchronous updates}

%----------------------------------------------------------------------
%%% ABSTRACT
%----------------------------------------------------------------------
\begin{abstract}
%----------------------------------------------------------------------
%%% ABSTRACT
%----------------------------------------------------------------------
% !TEX root = ./peis-arxiv.tex
%
%
Motivated by applications to multi-antenna wireless networks, we propose a distributed and asynchronous algorithm for stochastic semidefinite programming.
This algorithm is a stochastic approximation of a continous-time matrix exponential scheme regularized by the addition of an entropy-like term to the problem's objective function.
We show that the resulting algorithm converges almost surely to an $\eps$-approximation of the optimal solution requiring only an unbiased estimate of the gradient of the problem's stochastic objective.
When applied to throughput maximization in wireless \ac{MIMO} systems, the proposed algorithm retains its convergence properties under a wide array of mobility impediments such as user update asynchronicities, random delays and/or ergodically changing channels.
Our theoretical analysis is complemented by extensive numerical simulations which illustrate the robustness and scalability of the proposed method in realistic network conditions.
\end{abstract}

%----------------------------------------------------------------------
%%% ACKNOWLEDGMENTS
%----------------------------------------------------------------------

\thanks{%
This research was supported by the European Comission in the framework of the QUANTICOL project (grant agreement no. 600708)
and the French National Research Agency under grant agreements
NETLEARN (ANR\textendash 13\textendash INFR\textendash 004)
and GAGA (ANR\textendash 13\textendash JS01\textendash 0004\textendash 01).}

\thanks{Part of this work was presented in ISIT 2014 \cite{CGM14}.}

\maketitle

%*************************************************************
%*****    BODY TEXT
%*************************************************************
\acresetall

%----------------------------------------------------------------------
%%% INTRODUCTION
%----------------------------------------------------------------------
\section{Introduction}
\label{sec:intro}
%----------------------------------------------------------------------
%%% INTRODUCTION
%----------------------------------------------------------------------
% !TEX root = ./peis-arxiv.tex

Semidefinite programming (i.e. the minimization of a convex function over a convex subset of the cone of positive-semidefinite matrices) comprises a rich class of convex optimization problems that is both relatively tractable (interior-point methods can often be used with polynomial worst-case complexity \cite{nemi94}) and also very powerful (many optimization problems in engineering and combinatorial optimization can be recast as semidefinite programs \cite{Boyd}).
Especially in an engineering context however, many applications involve a certain degree of randomness (either in the objective function itself or in the feedback provided to the optimizer) \cite{GV97,YSPP14} so many standard semidefinite optimization algorithms cannot be applied ``off the shelf''.
For instance, minimum volume convering problems (where quadratic functions  can be expressed as semidefinite constraints) have been a very active research topic for the last fifty years \cite{Sun}.
In wireless telecommunications, transmission ranges of mobile devices have also been modeled  as Euclidean balls with random parameters, hence expressible via semidefinite constraints with stochastic perturbations;
as a result, route discovery in mobile ad-hoc networks is typically addressed using stochastic semidefinite programming approaches \cite{Zhu}.
In view of the above, we focus in this paper on \emph{stochastic} semidefinite programming, a subclass of semidefinite programs where the objective function is given in the form of a stochastic expectation, with possibly unknown randomness.

In this framework, there are two main algorithmic approaches.
In the ``offline'' approach, it is assumed that the optimizing agent (or agents in the case of multi-agent optimization) knows the stochastic expectation of his objective function in some (semi-)explicit form (possibly quite complicated) and tries to optimize it by calling an appropriate semidefinite optimization algorithm.
On the other hand, in the ``online'' approach to optimization, the functional form of the objective function (and any inherent randomness) is unknown and the agent seeks to optimize his objective based on indirect (and possibly imperfect) performance indicators.
The former approach is usually employed in large-scale industrial optimization problems where the collection of data is not costly, but their processing is;
instead, the latter approach applies to distributed optimization problems in complex systems (such as networks) where the optimizing agents are not capable of collecting a lot of optimization data \textendash\ but the agents have the computing power to handle the data they collect.

Motivated by applications to wireless networks, our paper adopts the second approach with the aim of proposing a fully distributed algorithm for stochastic multi-agent semidefinite optimization problems that
\begin{inparaenum}
[\itshape a\upshape)]
requires minimal (and possibly imperfect) gradient information;
and
\item
it is fully parallelizable and does not require any coordination between the optimizing agents.
\end{inparaenum}
This algorithm is obtained as a variable step-size stochastic approximation \cite{Ben99,Bor08} of a continuous-time matrix exponential learning scheme which has important ties to the mirror descent machinery of \cite{NJLS09,Nes09,KSST12}.
In contrast to mirror descent methods however, we establish the convergence of the algorithm's \emph{last} iterate and not only the convergence of its empirical time-average, properly weighed by the step-size sequence employed.
In applications to wireless mobile systems, this is crucial because it implies the convergence of the network to a stable, optimum state in a strong sense instead of a weaker, average sense.

To complement our abstract theoretical analysis (Sections \ref{sec:problem} and \ref{sec:analysis}), we also present a concrete application to multi-antenna wireless mobile networks with ergodically changing channel conditions.
As explained in Section \ref{sec:applications}, this case fits squarely within the core stochastic semidefinite programming framework of Section \ref{sec:problem}:
First, this is due to the problem's inherently distributed aspect (since it is often impossible \textendash\ or imractical \textendash\ to coordinate and/or synchronize the mobile users' updates),
and, second, due to the lack of full system information at the user end and the fact that users do not necessarily know the stochastic law of their channels.

The users' objective in this setting is to maximize their information transmission rate by optimizing the covariance matrix of their input signal distribution.
Two cases are considered.
First, we consider the case where the users have perfect feedback from the receiver but their channels evolve following a stationary, ergodic process (the fast-fading regime) \cite{GV97};
in this case, the users' transmission rate is the stochastic average of their achievable rate over all channel realizations and the problem boils down to a multi-agent stochastic semidefinite program.
The second case concerns static channel conditions (i.e. the wireless medium is assumed to evolve at a much slower rate than the transmitters' update time-scale).
In this case, a major challenge arises if the users only have access to imperfect receiver feedback and \acl{CSI};
thus, even though the underlying problem is deterministic, stochasticity arises from the noise in the users' measurements and observations.
A partial description of our method applied to this ``imperfect information'' case was presented at our earlier conference paper \cite{CGM14}.

In both cases, we show how the proposed algorithmic scheme can be implemented in both synchronous and asynchronous ways.
Additionally, we also provide a procedure to compute an unbiased estimator of  the gradient of the transmission rate for each transmitter via receiver-transmitter reciprocity.
Finally, we also provide a suite of numerical simulations to illustrate the robustness of our algorithm and to compare it to more traditional water-filling techniques (which it outperforms).

%----------------------------------------------------------------------
%%% PROBLEM FORMULATION
%----------------------------------------------------------------------
\section{Problem Formulation and Preliminaries}
\label{sec:problem}
%----------------------------------------------------------------------
%%% PROBLEM
%----------------------------------------------------------------------
% !TEX root = ./peis-arxiv.tex

As we mentioned in the introduction, our main goal is to provide an efficient and robust solution method for semidefinite optimization problems where the objective function depends on a controlled matrix variable $\bX$ and a random variable $\omega$ (that cannot be controlled by the optimizer).
More precisely, we consider problems where, through repeated iterations, the optimizing agent (or \emph{agent} for short) seeks to converge to a value of $\bX$ that optimizes the expected value of the objective function with respect to $\omega$ (i.e. that solves the agent's stochastic optimization problem ``on average'').
Obviously, if the agent's ``mean'' objective function can be calculated explicitly, the above boils down to a deterministic problem;
however, a major challenge occurs if this expectation cannot be calculated \textendash\ or, worse, if the distribution of $\omega$ is not even known to begin with.

The above problem will comprise the core of our considerations and we will formalize it in the following section;
a variant formulation for multi-agent environments is then provided in Section \ref{sec:multi}.
From a mathematical point of view, both models are essentially equivalent but, from a practical standpoint, they describe problems of a very different nature.

%%% Core model
%----------------------------------------------------------------------
\subsection{The core problem}
\label{sec:core}

Let $\herm_{M} = \{\bX\in\C^{M\times M}: \bX = \bX^{\dag}\}$ denote the space of $M\times M$ Hermitian matrices and let $\strat = \{\bX \mgeq0: \tr(\bX) = 1\}$ denote the spectrahedron of positive-semidefinite matrices with unit trace.
In what follows, we will focus on the \emph{stochastic semidefinite optimization problem:}
\begin{equation}
\label{eq:SSP}
\tag{SSP}
\begin{aligned}
\textrm{minimize}
	&\quad
	\ex[\obj(\bX;\omega)],
	\\
\textrm{subject to}
	&\quad
	\bX\in\strat,
\end{aligned}
\end{equation}
where $\omega$ is an abstract random variable taking values in some probability space $\Omega$,
the expectation $\ex[\argdot]$ is taken with respect to the law of $\omega$,
and $\obj\from\strat\times\Omega\to\R$ is a smooth random function which is convex with respect to $\bX\in\strat$ for all $\omega\in\Omega$.

Importantly, the simple formulation \eqref{eq:SSP} above accounts for a fairly wide class of stochastic optimization problems over compact spectrahedra (the semidefinite equivalent of polytopes, either real or complex).
In fact, as long as the feasible region $\strat'$ of a semidefinite program is a spectrahedron that is invariant under unitary transformations of the form $\bX\mapsto\bU\bX\bU^{\dag}$ for all unitary matrices $\bU$,%
\footnote{Recall here that a complex matrix $\bU$ is unitary if and only if $\bU \bU^{\dag} = \bU^{\dag} \bU = \bI$.
For the real case, invariance need only hold over all orthogonal matrices $\bO$ such that $\bO\bO^{\top} = \bO^{\top}\bO = \bI$.}
optimizing a convex function over $\strat'$ boils down to optimizing a convex function over $\strat$ (at the cost of increasing the problem's dimensionality) \cite{RG95}.
As such, \eqref{eq:SSP} can be seen as a canonical form for stochastic optimization problems over compact, unitary-invariant spectrahedra.

In the above framework, the \emph{mean objective function}
\begin{equation}
\label{eq:obj-mean}
\objmean(\bX)
	= \ex[\obj(\bX;\omega)]
%	\quad
%	\text{for all $\bX\in\strat$},
\end{equation}
is itself convex over $\strat$;
for simplicity, we will also assume that $\objmean$ is finite and smooth over $\strat$.
In this way, we obtain the (convex) \emph{semidefinite optimization problem}
\begin{equation}
\label{eq:SP}
\tag{SP}
\begin{aligned}
\textrm{minimize}
	&\quad
	\objmean(\bX),
	\\
\textrm{subject to}
	&\quad
	\bX\in\strat,
\end{aligned}
\end{equation}
which could be solved by standard convex programming methods, provided that $\objmean$ is known to the optimizer.
As such, the main difficulty in solving \eqref{eq:SSP}/\eqref{eq:SP} is precisely that the law of $\omega$ may not be known, in which case the functional form of $\objmean$ is also unknown.
To circumvent this difficulty, standard results in convex analysis \cite{Str65} show that the gradient matrix
\begin{equation}
\label{eq:V}
\bV(\bX)
	= \grad_{\bX}\objmean(\bX)
\end{equation}
of the mean objective function $\objmean$ may be calculated by interchanging differentiation with expectation, i.e.
\begin{equation}
\label{eq:grad-mean}
\grad_{\bX}\objmean(\bX)
	= \ex[\grad_{\bX}\obj(\bX;\omega)]
	\quad
	\text{for all $\bX\in\strat$}.
\end{equation}
Thus, following the stochastic approximation approach of \cite{NJLS09,Nes09,Boyd}, we will focus on solving \eqref{eq:SSP} based only on random (sample-dependent) estimates of the stochastic gradient matrices $\grad_{\bX}\obj(\bX;\omega)$.%
\footnote{For posterity, we note here that $\grad_{\bX}\obj(\bX;\omega)$ is Hermitian (on account of the fact that $\obj$ is real).}

To make this precise, let $\bX(1),\bX(2),\dotsc$, be a (possibly random) sequence of play by the optimizing agent \textendash\ that is, at stage $n$, the agent chooses $\bX(n)$ and incurs an expected cost of $\objmean(\bX(n))$.
Then, at each stage $n=1,2,\dotsc$, we will assume that the agent has access to a random matrix $\hat\bV(n)$ which satisfies the statistical unbiasedness hypothesis:
\begin{assumption}
\label{asm:zeromean}
$\hat\bV(n)$ is a uniformly bounded random variable such that
\begin{equation}
\tag{A1}
\label{eq:zeromean}
\ex\big[\hat\bV(n) \given \filter_{n} \big]
	= \bV(\bX(n))
	\quad
	\text{for all $n=1,2,\dotsc$},
\end{equation}
where $\filter_{n}$ denotes the filtration induced by the history process $\bX(n)$.
\end{assumption}

%\begin{enumerate}
%[({H}1)]
%
%\item
%\emph{Unbiasedness:}
%\begin{equation}
%\tag{H1}
%\label{eq:zeromean}
%\ex\big[\hat\bV(n) \given \filter_{n} \big]
%	= \bV(\bX(n)).
%%	\quad
%%	\text{for all $n\in\N$}.
%\end{equation}
%
%\item
%\emph{Finite \acf{MSE}:}
%\begin{equation}
%\tag{H2}
%\label{eq:MSE}
%\ex\big[ \smallnorm{\hat\bV(n) - \bV(\bX(n))}^{2} \given \filter_{n} \big]
%	\leq \noisedev^{2}
%	\quad
%	\text{for some $\noisedev>0$}.
%\end{equation}
%%for some $\noisedev>0$.
%\end{enumerate}

The statistical hypothesis above allow us to account for a very wide range of estimation oracles:
in particular, we will \emph{not} be assuming \ac{iid} observations (a feature which is crucial in the context of wireless networks where observations are typically correlated with the state of the system).
Instead, we will only assume there is an oracle mechanism that returns a $\filter_{n}$-measurable estimate $\hat\bV(n)$ of $\bV(\bX(n))$ once the agent plays $\bX(n)$;
the construction of such an oracle for specific applications will be detailed in Section \ref{sec:applications}.

\begin{remark}
\label{rem:DetF}
An important special case of the problem \eqref{eq:SSP} is when the expectation in \eqref{eq:obj-mean} is \emph{deterministic}, i.e. $f(\argdot,\omega) = f(\argdot,\omega')$ for almost every $\omega,\omega'\in\Omega$.
In that case, Assumption \eqref{eq:zeromean} accounts for problems where the optimizer is called to solve a deterministic semidefinite program with imperfect gradient feedback and stochasticity stems from the random noise perturbing the agent's observations.
More generally, depending on the structure of the probability space $\Omega$, the randomness in the stochastic optimization problem \eqref{eq:SSP} and the randomness in the gradient observations $\hat\bV(n)$ could be completely decoupled;
the only assumption that we will make regarding these different degrees of randomness is \eqref{eq:zeromean}.
\end{remark}

%%% Core model
%----------------------------------------------------------------------
\subsection{Multi-agent optimization and games}
\label{sec:multi}

In multi-agent en\-vi\-ron\-ments, we assume that there are multiple optimizing agents $k=1,\dotsc,K$, each one controlling an individual control variable $\bX_{k}$ that impacts the agents' global objective $\obj$ in a different way.
Specifically, this amounts to the following multi-agent version of \eqref{eq:SSP}:
\begin{equation}
\label{eq:SSP-multi}
\begin{aligned}
\textrm{minimize}
	&\quad
	\ex\big[\obj(\bX_{1},\dotsc,\bX_{K};\omega) \big],
	\\
\textrm{subject to}
	&\quad
	\bX_{k}\in\strat_{k},
\end{aligned}
\end{equation}
where $\strat_{k} = \{ \bX_{k}\in\C^{M_{k}\times M_{k}}: \bX_{k}\mgeq0,\,\tr(\bX_{k}) = 1 \}$ denotes the feasible region of agent $k$ and $f\from\prod_{k}\strat_{k}\times\Omega\to\R$ satisfies the same convexity and smoothness assumptions as before.

In this setting, if there is no central controller to coordinate the agents' actions and provide global feedback, it will be assumed that agents can only access an estimate $\hat\bV_{k}$ of their \emph{individual gradient matrices}
\begin{equation}
\label{eq:Vk}
\bV_{k}(\bX)
	= \grad_{\bX_{k}} \objmean(X)
\end{equation}
where $\bX = (\bX_{1},\dotsc,\bX_{K})$ denotes the agents' aggregate action profile and $\objmean(\bX) = \ex[\obj(\bX;\omega)]$.
Thus, mutatis mutandis, we will assume that Assumption \eqref{eq:zeromean} applies to each agent separately, and we will seek to provide a \emph{distributed optimization} algorithm that solves \eqref{eq:SSP-multi} under these assumptions.

As a further extension of the above framework, we will also consider the case where each agent seeks to minimize unilaterally an individual objective function $\obj_{k}$ (i.e. there is no global objective).
This situation is known as a \emph{game in normal form} (or, more simply, a \emph{game}) and the solution concept that we will focus on is that of Nash equilibrium \cite{Nas51,Deb52,Ros65,MS96}.
Formally, we will say that an action profile $\eq = (\eq_{1},\dotsc,\eq_{K})$ is a \emph{Nash equilibrium} of the game induced by the mean individual objective functions $\objmean_{k}(\bX) = \ex[\obj_{k}(\bX;\omega)]$ when
\begin{equation}
\label{eq:Nash}
\tag{NE}
\objmean_{k}(\eq)
	\leq \objmean_{k}(\bX_{k};\eq_{-k})
%	\quad
%	\text{for every unilateral deviation $\bX_{k}\in\strat_{k}$ and for every agent $k=1,\dotsc,K$},
\end{equation}
for every unilateral deviation $\bX_{k}\in\strat_{k}$ and for every agent $k=1,\dotsc,K$,
with $(\bX_{k};\eq_{-k})$ denoting the tuple $(\eq_{1},\dotsc,\bX_{k},\dotsc,\eq_{K})$.
Put differently, Nash equilibria are simply action profiles which are \emph{unilaterally stable} in that no agent has any incentive to deviate from them.

The connection between game theory and distributed optimization is recovered in the class of \emph{potential games} \cite{MS96}, i.e. games where the players' mean objective functions are aligned along a common potential function $\objmean$.
More precisely, following Monderer and Shapley \cite{MS96}, we will say that $\objmean$ is a \emph{potential function} for a game with mean objectives $\objmean_{k}$ when
\begin{equation}
\label{eq:potential}
\objmean_{k}(\bX_{k};\bX_{-k}) - \objmean_{k}(\bX_{k}';\bX_{-k})
	= \objmean(\bX_{k};\bX_{-k}) - \objmean_{k}(\bX_{k}';\bX_{-k})
\end{equation}
for all actions $\bX_{k}, \bX_{k}'\in\strat_{k}$ of agent $k$, and for all action profiles $\bX_{-k}\in\strat_{-k} \equiv \prod_{\ell\neq k} \strat_{\ell}$ of $k$'s opponents.
%(and, obviously, for every agent $k=1,2,\dotsc,K$).
As can be easily seen, if a game admits a potential function, its Nash equilibria necessarily coincide with the critical points of its potential function \cite{MS96}.
Thus, if the game's potential $\objmean$ is convex over $\strat \equiv \prod_{k}\strat_{k}$, it follows that the equilibrium problem \eqref{eq:Nash} can be reduced to the distributed optimization problem \eqref{eq:SSP-multi}.%
\footnote{Conversely, every distributed optimization problem can be seen as a potential game by setting $\obj_{k} = \obj$ for all $k$.}
We will use this observation freely throughout our paper \textendash\ and, especially, in Section \ref{sec:applications}.

%----------------------------------------------------------------------
%%% ANALYSIS
%----------------------------------------------------------------------
\section{Algorithms and Results}
\label{sec:analysis}
%----------------------------------------------------------------------
%%% ANALYSIS
%----------------------------------------------------------------------
% !TEX root = ./peis-arxiv.tex

%%% SINGLE-AGENT
%----------------------------------------------------------------------
\subsection{Single-agent optimization analysis}

The main algorithmic scheme that we will use to solve \eqref{eq:SSP}/\eqref{eq:SP}
%with gradient information satisfying Hypotheses \eqref{eq:zeromean} and \eqref{eq:MSE}
will be based on the following \ac{DXL} recursion:
\begin{equation}
\label{eq:DXL}
\tag{DXL}
\begin{aligned}
\bY(n+1)
	&= \bY(n) - \step_{n} \left( \hat\bV(n) + \temp\bY(n) \right),
	\\
\bX(n+1)
	&= \frac{\exp(\bY(n+1))}{\tr[ \exp(\bY(n+1)) ]},
\end{aligned}
\end{equation}
where:
\begin{enumerate}
\item
$\bY(n)$ is an auxiliary scoring matrix which aggregates gradient information.
\item
$\hat\bV(n)$ is a random matrix variable satisfying the unbiasedness assumption \eqref{eq:zeromean}.
\item
$\step_{n}$, $n=1,2,\dotsc$, is a nonincreasing step-size sequence (specific assumptions for $\step_{n}$ will be discussed below).
\item
$\temp>0$ is a (small) discount parameter which acts as a failsafe against the iterates $\bY(n)$ getting out of bounds.
\end{enumerate}

Intuitively, \eqref{eq:DXL} acts as a ``regularized'' stochastic gradient descent process:
if we ignore the parameter $\temp$ for the moment, each iteration of \eqref{eq:DXL} simply aggregates the received gradient information (in the update of $\bY$) and then ``projects'' back to the primal variable $\bX$ to receive a new gradient and continue the process.
The reason that the exponentiation step acts as a ``projection''
operator is that it aligns the eigenvalues of $\bX$ with those of
$\bY$, so, in a certain sense, $\bX$ is an ``exponential projection''
of $\bY$ to $\strat$.
%(for a more detailed discussion, see App. \ref{app:proofs}).

\begin{algorithm}[t]
{%
\flushleft
\sf
\vspace{2pt}
Parameters:
discount parameter $\temp>0$;
decreasing step-size sequence $\step_{n}$.
\\
Initialize:\;
$n \leftarrow 0$;\;
$\bY \leftarrow 0$;\;
$\bX \leftarrow \exp(\bY)/\tr[\exp(\bY)]$;
\\[2pt]
\Repeat
{%
%$\step\leftarrow \step_{n}$;
%\\[2pt]
\ForEach
{agent $k \in \play$}
	{\vspace{1ex}
	get gradient estimate $\hat\bV$;
	\\[2pt]
	update score matrix:
	$\bY \leftarrow \bY - \step_{n} \left( \hat\bV + \temp \bY \right)$;
	\\[2pt]
	update primal variable:
	$\dis \bX \leftarrow \exp(\bY) \big/\tr[\exp(\bY)]$;
	\\[2pt]
	$n \leftarrow n+1$;
} % end for
\vspace{.5ex}
\textbf{until} termination criterion is reached.} % end Repeat
} % end \sf
\caption{Algorithmic implementation of \eqref{eq:DXL}.}
\label{alg:DXL}
\end{algorithm}

The role of the discount parameter $\temp$ in \eqref{eq:DXL} (and the reason for calling it a ``discount'' in the first place) is more subtle.
To understand it, note first that the recursive step of \eqref{eq:DXL} can be rewritten in aggregate form as: 
\begin{equation}
\label{eq:discount}
\bY(n+1)
	= e^{-T_{n,1}} \bY(1)
	- \sum_{m=1}^{n} e^{-T_{n,m+1}} \step_{m} \hat\bV(m),
\end{equation}
where, assuming that $\step_{n}$ is small enough, we have set:
\begin{equation}
\label{eq:time-discounted}
T_{n,m}
	= \sum_{j=m}^{n} \log(1 - \temp\step_{j}).
\end{equation}
By expanding the logarithm to leading order in \eqref{eq:time-discounted}, this last sum is asymptotically equal to $-\temp t_{n,m}$ where
\begin{equation}
\label{eq:time-step}
t_{n,m}
	= \sum_{j=m}^{n} \step_{j}.
\end{equation}
Accordingly, to leading order, \eqref{eq:discount} can be rewritten for large $n$ (and small $\step_{n}$) as:
\begin{equation}
\label{eq:discount-simple}
\bY(n+1)
	\approx \discrate^{t_{n,1}} \bY(1)
	- \sum_{m=1}^{n} \discrate^{t_{n,m+1}} \step_{m} \hat\bV(m),
\end{equation}
with $t_{n,m}$ given by \eqref{eq:time-step} and $\discrate = \exp(-\temp)$.

Of course, the above derivation is approximate in nature but it highlights the discount role of $\temp$.
For a constant step-size sequence $\step_{n} = \step$, we have $t_{n,m} = \step \cdot (n-m)$, so the exponential sum in \eqref{eq:discount-simple} means that \eqref{eq:DXL} assigns (exponentially) more weight to recent observations rather than older ones.
In a sense, this discounting counters the use of a vanishing $\step_{n}$.
A decreasing step-size implies that more recent gradient observations enter the algorithm with a decreasing weight;
by contrast, the use of a positive discount parameter $\temp>0$ tempers this (somewhat counter-intuitive) behavior by increasing the relative weight of more recent gradient observations.
Moreover, from a calculational standpoint, the use of a positive discount parameter $\temp$ has the added benefit that the auxiliary score matrices $\bY(n)$ cannot grow too large.
If the step-size sequence $\step_{n}$ is chosen in a way such that the geometric series $\insum_{m=1}^{n} \temp_{m} \discrate^{t_{n,m+1}}$ remains summable,%
\footnote{We will elaborate more on the choice of $\step_{n}$ below.}
then $\bY(n)$ will be uniformly bounded on account of \eqref{eq:discount-simple} and Assumption \eqref{eq:zeromean}.
Since computing d

Of course, in so doing, the discount parameter $\temp$ also introduces a systematic deterministic bias to the gradient observations $\hat\bV(n)$, i.e. a perturbation that persists even in the noiseless regime where $\hat\bV(n)$ is actually deterministic.
Indeed, if \eqref{eq:DXL} is run with perfect gradient observations $\hat\bV(n) = \bV(\bX(n))$, then any fixed point $\eq$ of \eqref{eq:DXL} will satisfy:
\begin{equation}
\label{eq:fixed}
\begin{aligned}
\temp\bY^{\ast}
	&= \bV(\eq),
	\\
\eq
	&= \frac{\exp(\bY^{\ast})}{\tr[\exp(\bY^{\ast})]}.
\end{aligned}
\end{equation}
Setting $\bV^{\ast} = \bV(\eq)$ for convenience and solving \eqref{eq:fixed} for $\eq$ then gives:
\begin{equation}
\bV^{\ast} + \temp \log\eq
	= - \temp \log \tr\big[ \exp(-\bV^{\ast}/\temp) \big] \bI,
\end{equation}
or, after a slight rearrangement:
\begin{equation}
\label{eq:fixed-2}
\bV^{\ast} + \temp \left( \log \eq + \bI \right)
	= -\kappa \bI,
\end{equation}
for $\kappa = \temp \left( 1 + \tr[\exp(-\bV^{\ast}/\temp)] \right)$.
Importantly, the RHS of \eqref{eq:fixed-2} can be written more simply as $\bV^{\ast} + \temp \left( \log \eq + \bI \right) = \grad \objmean_{\temp}(\eq)$ where the perturbed objective function $\objmean_{\temp}\from\strat\to\R$ is defined as:
\begin{equation}
\label{eq:obj-temp}
\objmean_{\temp}(\bX)
	= F(\bX) + h(\bX),
\end{equation}
with
\begin{equation}
\label{eq:entropy}
h(\bX)
	= \tr[ \bX \log\bX ]
\end{equation}
denoting the so-called \emph{von Neumann} (or \emph{quantum}) \emph{entropy} of $\bX$ \cite{Wil67}.%
\footnote{That the gradient of $\objmean_{\temp}$ is $\grad \objmean_{\temp}(\bX) = \bV(\bX) + \temp(\log \bX + \bI)$ follows from standard arguments in matrix calculus \textendash\ see e.g. \cite[Appendix D]{Dat05}.}
Thus, given that $\eq$ must satisfy the trace constraint $\tr(\eq) = 1$, it follows that any fixed point $\eq$ of \eqref{eq:DXL} will be a solution of the perturbed optimization problem:
\begin{equation}
\label{eq:SP-temp}
\tag{SP$_{\temp}$}
\begin{aligned}
\textrm{minimize}
	&\quad
	\objmean(\bX) + \temp \tr[\bX \log \bX],
	\\
\textrm{subject to}
	&\quad
	\bX\in\strat.
\end{aligned}
\end{equation}

Obviously, the solution set of \eqref{eq:SP-temp} is asymptotically close to that of the unperturbed problem \eqref{eq:SP} in the limit $\temp\to0$ (where the entropic perturbation term $h(\bX)$ vanishes):
more precisely, if $\eq_{\temp}$ is a solution of \eqref{eq:SP-temp}, there exists a solution $\eq$ of \eqref{eq:SP} such that the distance between $\eq_{\temp}$ and $\eq$ vanishes as $\temp\to0$.
That said, an important difference between \eqref{eq:SP} and \eqref{eq:SP-temp} is that the latter is strictly convex (because $h$ is).
As a result, \eqref{eq:SP-temp} admits a unique solution, even when the solution set of \eqref{eq:SP} is a non-singleton convex set.

With all this in mind, we are in a position to state our main result for \eqref{eq:DXL}:

\begin{theorem}
\label{thm:conv-main}
Assume that \eqref{eq:DXL} is run with gradient observations satisfying \eqref{eq:zeromean} and with a variable step-size sequence $\step_{n}$ such that $\sum_{n=1}^{\infty} \step_{n}^{2} < \sum_{n=1}^{\infty} \step_{n} = \infty$.
Then, the iterates $\bX(n)$ of \eqref{eq:DXL} converge almost surely to a solution of the perturbed optimization problem \eqref{eq:SP-temp};
in particular, $\bX(n)$ converges \textup(a.s.\textup) within $\eps(\temp)$ of a solution of the stochastic problem \eqref{eq:SSP} and the error $\eps(\temp)$ vanishes in the limit $\temp\to0$.
\end{theorem}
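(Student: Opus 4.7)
The plan is to treat \eqref{eq:DXL} as a Robbins--Monro stochastic approximation of the mean-field ODE
\begin{equation*}
\dot{\bY}(t) = -\bV(\choice(\bY(t))) - \temp\,\bY(t),
\qquad
\choice(\bY) = \exp(\bY)/\tr[\exp(\bY)],
\end{equation*}
and then exhibit a strict Lyapunov function whose unique minimum corresponds to the solution of \eqref{eq:SP-temp}. First I would rewrite \eqref{eq:DXL} in the canonical form $\bY(n+1)=\bY(n)+\step_{n}\big[b(\bY(n))+\bU(n+1)\big]$ with drift $b(\bY)=-\bV(\choice(\bY))-\temp\bY$ and martingale difference $\bU(n+1)=\bV(\bX(n))-\hat\bV(n)$; Assumption~\eqref{eq:zeromean} makes $\bU$ a bounded $\filter_{n}$-martingale difference sequence, and the step-size condition $\sum\step_{n}^{2}<\infty$ then ensures (by the standard $L^{2}$-martingale arguments of \cite{Ben99,Bor08}) that the noise term is asymptotically negligible.

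Next I would establish the \emph{uniform boundedness} of $\bY(n)$, which is the place where the discount $\temp>0$ plays its decisive role. Taking operator norms in the recursion and using that $\hat\bV(n)$ is uniformly bounded (Assumption~\ref{asm:zeromean}), one gets $\|\bY(n+1)\|\leq(1-\temp\step_{n})\|\bY(n)\|+\step_{n}C$ for some constant $C$ and all $\step_{n}$ small enough, which by an elementary induction yields $\sup_{n}\|\bY(n)\|<\infty$ almost surely. Combined with the Lipschitz continuity of $\choice$ and $\bV$, this gives the ``APT property'': the affine interpolation of $\bY(n)$ is an asymptotic pseudotrajectory of the mean ODE, so by Bena\"{\i}m's theorem the limit set of $\bY(n)$ is (a.s.) a nonempty, compact, connected, internally chain-transitive set for the ODE.

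The core of the argument is then to show that the only ICT set is the singleton $\{\bY^{\ast}\}$ corresponding via $\eq_{\temp}=\choice(\bY^{\ast})$ to the unique solution of \eqref{eq:SP-temp}. For this I would use the Fenchel coupling
\begin{equation*}
D(\eq_{\temp},\bY)
= h(\eq_{\temp})+\log\tr[\exp(\bY)]-\product{\bY}{\eq_{\temp}},
\end{equation*}
which is nonnegative, vanishes exactly at $\bY^{\ast}$ (this is the content of \eqref{eq:fixed}--\eqref{eq:fixed-2}), and whose gradient in $\bY$ is $\choice(\bY)-\eq_{\temp}$. Differentiating along the ODE gives
\begin{equation*}
\tfrac{d}{dt}D(\eq_{\temp},\bY(t))
= \product{\choice(\bY)-\eq_{\temp}}{-\bV(\choice(\bY))-\temp\bY}.
\end{equation*}
Using the fixed-point identity $\bV(\eq_{\temp})+\temp\bY^{\ast}=\lambda\bI$ for some scalar $\lambda$, together with $\tr(\choice(\bY)-\eq_{\temp})=0$, the right-hand side rewrites as $-\product{\choice(\bY)-\eq_{\temp}}{\grad\objmean_{\temp}(\choice(\bY))-\grad\objmean_{\temp}(\eq_{\temp})}$, which is strictly negative away from $\eq_{\temp}$ by strict convexity of $\objmean_{\temp}=\objmean+\temp h$. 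Hence $D(\eq_{\temp},\argdot)$ is a strict Lyapunov function for the ODE, so every ICT set is $\{\bY^{\ast}\}$ and $\bX(n)=\choice(\bY(n))\to\eq_{\temp}$ almost surely.

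The main obstacle I expect is the Lyapunov calculation: turning the ODE derivative of $D$ into a difference of gradients of $\objmean_{\temp}$ requires care with the trace-normalization constraint (the scalar $\kappa\bI$ in \eqref{eq:fixed-2} has to be absorbed using $\tr(\choice(\bY)-\eq_{\temp})=0$) and with the matrix calculus for $\log\tr\exp$ and $h$. Once the strict Lyapunov property is in hand, the final statement follows by continuity: the perturbed problems \eqref{eq:SP-temp} $\Gamma$-converge to \eqref{eq:SP} as $\temp\to 0$, so by compactness of $\strat$ every cluster point of $\eq_{\temp}$ is a minimizer of $\objmean$, yielding $\eps(\temp)\coloneqq\dist(\eq_{\temp},\arg\min\objmean)\to 0$ and hence the claimed $\eps(\temp)$-approximate convergence to \eqref{eq:SSP}.
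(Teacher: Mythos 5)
Your proposal is correct, and the stochastic-approximation skeleton (boundedness of $\bY(n)$ via the discount $\temp$, the \acs{APT}/\acs{ICT} reduction to the mean ODE, and the $\temp\to0$ limit at the end) coincides with the paper's. Where you genuinely diverge is in the Lyapunov step. The paper works in the \emph{primal} variable: it first derives the induced dynamics of $\bX(t)$ by differentiating $\bX=\exp(\bY)/\tr[\exp(\bY)]$ with Fr\'echet's formula for the matrix exponential (Lemma \ref{lem:DXL-primal}), and then shows $\frac{d}{dt}\objmean_{\temp}(\bX(t))\leq 0$ via a Cauchy--Schwarz/Jensen-type trace inequality $\tr(\bX^{1-s}\bW\bX^{s}\bW)\geq\tr(\bX\bW)^{2}$ (Lemma \ref{lem:Jensen}); the Lyapunov function is $\objmean_{\temp}$ itself and makes no reference to the optimum. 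You instead work in the \emph{dual} variable with the Fenchel coupling $D(\eq_{\temp},\bY)$, which sidesteps both the matrix-exponential differentiation and the trace inequality, replacing them with the monotonicity of $\grad\objmean_{\temp}$ -- a cleaner and arguably more standard mirror-descent argument, at the price of needing the fixed point $\eq_{\temp}$ (existence, uniqueness and interiority, which do follow from strict convexity of $\objmean_{\temp}$ and the boundary blow-up of the entropy) \emph{before} the Lyapunov computation. One small inaccuracy to fix: $D(\eq_{\temp},\argdot)$ does not vanish ``exactly at $\bY^{\ast}$'' but on the whole line $\{\log\eq_{\temp}+c\bI: c\in\R\}$ (the Fenchel--Young equality set), so the \acs{ICT} argument a priori only localizes the limit set of $\bY(n)$ inside that line; this is harmless because $\choice$ maps the entire line to $\eq_{\temp}$ (and the flow restricted to the line is a linear contraction to $\bY^{\ast}$), but it should be said. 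Both routes are valid; the paper's buys a self-contained primal monotonicity statement (Proposition \ref{prop:Lyapunov}) usable without identifying the minimizer, yours buys a shorter computation that generalizes immediately to other mirror maps.
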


Theorem \ref{thm:conv-main} will be our main result for \eqref{eq:DXL} so, before proving it, some remarks are in order:

\begin{remark}
The statement of Theorem \ref{thm:conv-main} suggests that the discount parameter $\temp$ should be taken as small possible in order to ensure the algorithm's convergence to a state $\eq_{\temp}\in\strat$ that is as close as possible to the solution set of \eqref{eq:SSP}.
On the other hand, very small $\temp>0$ could mean that the iterates $\bY(n)$ of \eqref{eq:DXL} could grow quite large, potentially exceeding the numerical capacity of the optimizing's agent calculating device \textendash\ recall the discussion surrounding \eqref{eq:discount-simple}.
As a result, the discount parameter $\temp>0$ essentially reflects the algorithm's accuracy vs. memory trade-off:
lower values of $\temp > 0$ lead to better solutions of \eqref{eq:SSP}, but at the expense of higher memory requirements and more processing power.
Ultimately, the choice of $\temp$ relies on the technical specifications of the optimization problem to be solved so the ``optimal'' choice of $\temp$ can only be made on a case-by-case basis.
\end{remark}

\begin{remark}
In a similar vein to the above remark, Assumption \ref{asm:zeromean} can actually be relaxed to account for gradient observations that are only bounded in mean squre (instead of being bounded almost surely).
In this context however, a given observation $\hat\bV$ of $\bV$ could exceed the storage/processing capacity of the agent's optimizing device, thus introducing additional arithmetic stability errors to running \eqref{eq:DXL}.
Such issues lie beyond the scope of the current work so we opted to work with the almost sure boundedness assumption for simplicity.
\end{remark}

\begin{remark}
We should also note here that the ``$\ell^{2} - \ell^{1}$'' summability condition $\sum_{n=1}^{\infty} \step_{n}^{2} < \sum_{n=1}^{\infty} \step_{n} = \infty$ can also be relaxed in the context of Assumption \ref{asm:zeromean}.
Specifically, Theorem \ref{thm:conv-main} remains true even with significantly more aggressive step-size sequences of the form $\step_{n} = n^{-a}$ for some arbitrarily small $a>0$.
The reason for stating (and proving) Theorem \ref{thm:conv-main} in the ``$\ell^{2} - \ell^{1}$'' framework was only done for simplicity;
in practice, the use of  a (nearly) constant step-size greatly accelerates the algorithm, a fact that we explore in Section \ref{sec:applications}.
\end{remark}

Now, to prove Theorem \ref{thm:conv-main}, our strategy will be as follows:
First, we will show that the iterates of \eqref{eq:DXL} constitute a so-called \ac{APT} of the mean, continuous-time dynamics:
\begin{equation}
\label{eq:DXL-cont}
\tag{DXL$_{c}$}
\begin{aligned}
\dot\bY
	&= -\bV(\bX) - \temp \bY,
	\\
\bX
	&= \frac{\exp(\bY)}{\tr[ \exp(\bY) ]},
\end{aligned}
\end{equation}
i.e. the iterates of \eqref{eq:DXL} are asymptotically close to solution segments of \eqref{eq:DXL-cont} of arbitrary length \cite{Ben99}.
We will then show that \eqref{eq:DXL-cont} converges to the (unique) solution of the perturbed optimization problem \eqref{eq:SP-temp};
the claim of \eqref{thm:conv-main} will then follow from standard results in the theory of stochastic approximation \cite{Ben99}.

We begin by showing that the iterates of \eqref{eq:DXL} comprise an \acl{APT} of the dynamics \eqref{eq:DXL-cont} in the sense of \cite{Ben99}, i.e.
\begin{equation}
\label{eq:APT}
\lim_{t\to\infty} \sup_{0\leq h \leq T} \norm{\bar\bX(t+h) - \Phi_{h}(\bar\bX(t))}
	= 0
	\quad
	\text{(a.s.)},
\end{equation}
where $\bar\bX(t)$, $t\geq0$ is the linear interpolation of the iterates $\bX(n)$ of \eqref{eq:DXL} while $\Phi_{t}(\bX)$ denotes the \emph{flow} induced on $\strat$ by \eqref{eq:DXL-cont} \textendash\ i.e. $\Phi_{t}(\bX)$, $t\geq0$, is the solution trajectory of \eqref{eq:DXL-cont} that starts at $\bX\in\strat$.
To that end, we will first need the following boundedness result:

\begin{lemma}
\label{lem:bounded}
If $\step_{n} < 1/\temp$ for all sufficiently large $n$, then the iterates $\bY(n)$ of \eqref{eq:DXL} under Assumption \ref{asm:zeromean} are bounded \textup(a.s.\textup).
\end{lemma}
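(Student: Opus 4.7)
The plan is to exploit the contraction structure that appears once the damping term $\temp\bY(n)$ dominates. First I would rewrite a single step of \eqref{eq:DXL} in the equivalent form
\begin{equation*}
\bY(n+1) = (1 - \step_{n}\temp)\,\bY(n) - \step_{n}\hat\bV(n),
\end{equation*}
so that the update is an affine combination of the previous score matrix and the (scaled) observed gradient estimate. The hypothesis $\step_{n} < 1/\temp$ for all $n \geq n_{0}$ guarantees that the coefficient $1 - \step_{n}\temp$ lies strictly in $(0,1)$, i.e. the deterministic part of the iteration is strictly contractive.

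Next I would invoke Assumption \ref{asm:zeromean}, which provides a deterministic constant $B$ with $\smallnorm{\hat\bV(n)} \leq B$ almost surely for every $n$ (with respect to, say, the Frobenius norm on $\herm_{M}$, which is equivalent to any other unitarily invariant norm). Taking norms on both sides and using the triangle inequality yields, for $n \geq n_{0}$,
\begin{equation*}
\smallnorm{\bY(n+1)} \leq (1 - \step_{n}\temp)\,\smallnorm{\bY(n)} + \step_{n} B \qquad \text{(a.s.).}
\end{equation*}

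Finally, set $M = B/\temp$ and argue by induction on $n \geq n_{0}$ that $\smallnorm{\bY(n)} \leq \max\{\smallnorm{\bY(n_{0})}, M\}$ almost surely. Indeed, if $\smallnorm{\bY(n)} \leq M$, then the displayed bound gives $\smallnorm{\bY(n+1)} \leq (1 - \step_{n}\temp)M + \step_{n}\temp M = M$; whereas if $\smallnorm{\bY(n)} > M$, the same bound yields
\begin{equation*}
\smallnorm{\bY(n+1)} \leq \smallnorm{\bY(n)} - \step_{n}\temp\bigl(\smallnorm{\bY(n)} - M\bigr) < \smallnorm{\bY(n)},
\end{equation*}
so the norm strictly decreases until it falls below $M$ (and then stays there, by the first case). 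Combined with the trivial bound $\smallnorm{\bY(n_{0})} \leq \smallnorm{\bY(1)} + \sum_{j<n_{0}} \step_{j}(B + \temp\smallnorm{\bY(j)})$ propagated from the initial condition, this establishes the almost sure uniform boundedness of $\{\bY(n)\}_{n\in\N}$.

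No serious obstacle is anticipated: the result is essentially a deterministic stability argument for a contracting affine recursion perturbed by a uniformly bounded (but otherwise arbitrary) disturbance, and the step-size threshold $\step_{n} < 1/\temp$ is precisely what is needed to keep the contraction factor in $(0,1)$. The only mild subtlety is making sure the bound is uniform in the sample path, which is immediate from the almost sure boundedness of $\hat\bV(n)$ in Assumption \ref{asm:zeromean}.
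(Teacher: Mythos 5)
Your proof is correct and follows essentially the same route as the paper's: the same affine rewriting $\bY(n+1) = (1-\step_{n}\temp)\bY(n) - \step_{n}\hat\bV(n)$, the same norm bound $\smallnorm{\bY(n+1)} \leq (1-\step_{n}\temp)\smallnorm{\bY(n)} + \step_{n}\vbound$, and the same two-case induction around the threshold $\vbound/\temp$. If anything, your version is slightly tidier in explicitly carrying the finitely many pre-$n_{0}$ iterates and stating the invariant as $\max\{\smallnorm{\bY(n_{0})}, \vbound/\temp\}$, which the paper leaves implicit.
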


\begin{proof}
First, let $\vbound>0$ be such that $\smallnorm{\hat\bV(n)} \leq \vbound$ almost surely (that such a $\vbound$ exists is a consequence of Assumption \ref{asm:zeromean});
additionally, let $n_{0}$ be such that $0 < 1 - \step_{n} \temp \leq 1$ for all $n \geq n_{0}$.
Then, for $n \geq n_0$, the definition \eqref{eq:DXL} of $\bY(n)$ and the bound $\smallnorm{\hat\bV(n)} < \vbound$ readily yield $\smallnorm{\bY(n+1)} \leq (1 - \temp \step_{n}) \smallnorm{\bY(n)} + \step_{n} \vbound$.
We are thus reduced to the following cases:
\begin{itemize}
\item
If $\temp \smallnorm{\bY(n)} \geq \vbound$, then $\smallnorm{\bY(n+1)} \leq \smallnorm{\bY(n)}  + \step_{n} (\vbound - \temp \smallnorm{\bY(n)} ) \leq \smallnorm{\bY(n)}$, so $\bY(n)$ decreases in norm.
\item
Otherwise, if $\temp \smallnorm{\bY(n)} < \vbound$, we will have $\smallnorm{\bY(n+1)} \leq (1 - \step_{n} \temp) \vbound/\temp  + \step_{n} \vbound = \vbound/\temp$.
\end{itemize}
It follows that $\smallnorm{\bY(n+1)}$ will either decrease or be uniformly bounded by $\vbound$, so our claim follows by induction.
\end{proof}

Thanks to this lemma, we readily obtain:
\begin{proposition}
\label{prop:APT}
With notation as in Lemma \ref{lem:bounded}, the sequence $\bY(n)$ comprises an \acl{APT} of \eqref{eq:DXL-cont}.
\end{proposition}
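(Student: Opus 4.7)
The plan is to recast \eqref{eq:DXL} in Robbins--Monro form and then invoke the standard \acs{APT} criterion of Benaïm~\cite{Ben99}. Specifically, letting $\choice(\bY) = \exp(\bY)/\tr[\exp(\bY)]$ denote the exponential projection from $\herm_{M}$ onto $\strat$, the recursion for $\bY(n)$ can be written as
\[
\bY(n+1) = \bY(n) + \step_{n}\bigl[ F(\bY(n)) + U_{n+1} \bigr],
\]
where $F(\bY) = -\bV(\choice(\bY)) - \temp \bY$ is the drift generating the continuous dynamics \eqref{eq:DXL-cont}, and $U_{n+1} = \bV(\bX(n)) - \hat\bV(n)$ is a noise term. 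Benaïm's criterion then requires three ingredients: (a)~that $F$ be Lipschitz on the region visited by the iterates; (b)~that the noise process $U_{n+1}$ be a martingale difference whose $\step_{n}$-weighted partial sums converge almost surely; and (c)~that $\bY(n)$ remain almost surely bounded.

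Item (c) is precisely the content of Lemma \ref{lem:bounded}, so $\bY(n)$ lives almost surely in some compact subset $\bK \subset \herm_{M}$. For (a), the map $\bY \mapsto \exp(\bY)$ is analytic, and since $\tr[\exp(\bY)] \geq M > 0$ uniformly, the quotient $\choice$ is smooth on $\bK$; composing with $\bV$ (which is smooth on the compact spectrahedron $\strat$ by the assumed smoothness of $\objmean$) and adding the linear term $-\temp \bY$ yields a $C^{1}$ vector field that is Lipschitz on $\bK$. For (b), Assumption \ref{asm:zeromean} gives $\ex[U_{n+1} \mid \filter_{n}] = \bV(\bX(n)) - \ex[\hat\bV(n) \mid \filter_{n}] = 0$, so $U_{n+1}$ is a martingale difference adapted to $\filter_{n}$; moreover, it is uniformly bounded (because $\hat\bV(n)$ is bounded by hypothesis and $\bV$ is continuous on $\strat$), so $\sum_{n} \step_{n}^{2} \ex[\smallnorm{U_{n+1}}^{2} \mid \filter_{n}] < \infty$ by the $\ell^{2}$-summability of $\step_{n}$. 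The martingale convergence theorem then ensures that $M_{n} = \sum_{k=1}^{n} \step_{k} U_{k+1}$ converges almost surely.

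With (a)--(c) in hand, Proposition 4.1 of \cite{Ben99} (or equivalently the main APT theorem there) applies directly: the piecewise-affine interpolation $\bar\bY(t)$ of the sequence $\{\bY(n)\}$ is an \acl{APT} of the semiflow $\Phi_{t}$ generated by $\dot{\bY} = F(\bY)$, i.e.\ \eqref{eq:DXL-cont}. Since $\choice$ is Lipschitz on $\bK$, the APT property automatically transfers from $\bY(n)$ to $\bX(n) = \choice(\bY(n))$ as stated in \eqref{eq:APT}.

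The only step that requires any care is verifying Lipschitz continuity of $F$ uniformly on the trajectory; this is where the quantitative bound provided by Lemma \ref{lem:bounded} is essential, since the exponential map grows super-polynomially and is \emph{not} globally Lipschitz. Once the iterates are confined to a compact set, however, all the smoothness estimates become uniform and the APT machinery of \cite{Ben99} applies verbatim.
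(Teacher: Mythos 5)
Your proof is correct and follows essentially the same route as the paper's: both recast \eqref{eq:DXL} as a Robbins--Monro scheme whose conditional mean is the drift of \eqref{eq:DXL-cont}, invoke Lemma \ref{lem:bounded} for almost sure boundedness, and conclude via Proposition 4.1 of \cite{Ben99} (you merely spell out the Lipschitz and martingale-noise verifications that the paper leaves implicit). The only blemish is the claim that $\tr[\exp(\bY)]\geq M$ holds \emph{uniformly} on $\herm_{M}$ (it fails when all eigenvalues of $\bY$ tend to $-\infty$), but this is harmless since positivity and continuity of $\bY\mapsto\tr[\exp(\bY)]$ already give a uniform lower bound on the compact set where the iterates live.
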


\begin{proof}
First, taking expectations in the RHS of \eqref{eq:DXL} yields:
\begin{equation}
\label{eq:mean}
\ex[\bY(n) - \step_{n} (\hat\bV(n) + \temp \bY(n)) \given \filter_{n}]
	= \bY(n) - \step_{n}( \bV(\bX(n)) + \temp \bY(n) ),
\end{equation}
where we used Assumption \ref{asm:zeromean} and the fact that $\bY(n)$ and $\bX(n)$ are fully determined by $\filter_{n}$.
Since $\bY(n)$ is bounded (a.s.) by Lemma \ref{lem:bounded}, our claim follows from Proposition 4.1 in \cite{Ben99}.
\end{proof}

We now proceed to show that the dynamics \eqref{eq:DXL-cont} converge to the (unique) solution of the perturbed optimization problem \eqref{eq:SP-temp} from any initial condition $\bY(0)$.
To that end, we will first need to derive the dynamics of the primal control variable $\bX(t)$:

\begin{lemma}
\label{lem:DXL-primal}
Let $\bX(t)$ be a solution orbit of the continuous-time dynamical system \eqref{eq:DXL-cont}.
Then, $\bX(t)$ satisfies the dynamics:
\begin{equation}
\label{eq:DXL-primal}
\dot\bX
	= - \int_{0}^{1} \bX^{1-s} \bV_{\temp}(\bX) \bX^{s} \dd s
	+ \tr[\bX \bV_{\temp}(\bX)] \bX,
\end{equation}
where
\begin{equation}
\label{eq:V-temp}
\bV_{\temp}(\bX)
	= \bV(\bX) + \temp \log \bX.
\end{equation}
\end{lemma}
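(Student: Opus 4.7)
The plan is to differentiate the defining relation $\bX = e^{\bY}/\tr[e^{\bY}]$ with respect to $t$, then substitute the $\bY$-dynamics \eqref{eq:DXL-cont} and rewrite everything in terms of $\bX$ and $\bV_{\temp}(\bX)$.

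First I would invoke the standard Duhamel integral formula for the derivative of the matrix exponential along a smooth Hermitian curve $\bY(t)$:
\begin{equation*}
\frac{d}{dt} e^{\bY(t)}
	= \int_{0}^{1} e^{s\bY(t)} \dot\bY(t) \, e^{(1-s)\bY(t)} \dd s,
\end{equation*}
which is needed because $\bY$ and $\dot\bY$ need not commute. Applying the quotient rule to $\bX = e^{\bY}/\tr[e^{\bY}]$, using the cyclic property of the trace to simplify $\tr[\tfrac{d}{dt} e^{\bY}] = \tr[e^{\bY}\dot\bY]$, and substituting $e^{s\bY} = \tr[e^{\bY}]^{s}\bX^{s}$ (which holds because positive powers of $\bX$ and $e^{\bY}$ differ only by scalar factors), one obtains the clean intermediate identity
\begin{equation*}
\dot\bX
	= \int_{0}^{1} \bX^{s}\,\dot\bY\,\bX^{1-s} \dd s
	- \tr[\bX \dot\bY]\,\bX.
\end{equation*}

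Next I would bridge $\bY$ and $\bX$ on the RHS. From $\bX = e^{\bY}/\tr[e^{\bY}]$ we get $\log \bX = \bY - c\,\bI$ with scalar $c = \log \tr[e^{\bY}]$, so the dynamics \eqref{eq:DXL-cont} rewrite as
\begin{equation*}
\dot\bY
	= -\bV(\bX) - \temp\bY
	= -\bV_{\temp}(\bX) - \temp c\,\bI.
\end{equation*}
Plugging this into the intermediate identity, the scalar-multiple-of-identity contribution produces $-\temp c\,\bX$ from the integral (since $\int_{0}^{1}\bX^{s}\bX^{1-s}\dd s = \bX$) and $+\temp c\,\bX$ from the trace term (using $\tr(\bX)=1$); these cancel exactly.

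What remains is precisely
\begin{equation*}
\dot\bX
	= -\int_{0}^{1} \bX^{s}\,\bV_{\temp}(\bX)\,\bX^{1-s} \dd s
	+ \tr[\bX \bV_{\temp}(\bX)]\,\bX,
\end{equation*}
which agrees with the stated formula after the change of variable $s \mapsto 1-s$ in the integral. The only place requiring care is the matrix exponential derivative formula itself (because $\bV(\bX)$ need not commute with $\bX$, so the naive chain rule fails), together with the bookkeeping that shows the $c\,\bI$ drift introduced by the discount term is annihilated by the simplex-projection step; once those are in place the computation is essentially algebraic.
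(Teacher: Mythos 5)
Your proposal is correct and follows essentially the same route as the paper: differentiate $\bX = e^{\bY}/\tr[e^{\bY}]$ via the quotient rule and the Duhamel/Fr\'echet integral formula for $\tfrac{d}{dt}e^{\bY}$, then substitute $\dot\bY = -\bV_{\temp}(\bX) - \temp\log\tr[e^{\bY}]\,\bI$ and observe that the scalar-identity drift cancels between the integral and the trace term. If anything, your bookkeeping is slightly more careful than the paper's (which writes the additive scalar as $\temp\pf(\bY)\bI$ rather than $\temp\log\pf(\bY)\bI$, an apparent typo that your $c = \log\tr[e^{\bY}]$ gets right), and the $s\mapsto 1-s$ reindexing you note is immaterial.
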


\begin{proof}
Let $\pf(\bY) = \tr[\exp(\bY)]$.
Then, differentiating $\bX(t)$ with respect to $t$, we get:
\begin{flalign}
\dot\bX
	&= \frac{1}{\pf(\bY)} \frac{d}{dt} \exp(\bY) - \frac{\exp(\bY)}{\pf^{2}(\bY)} \dot\pf
	\notag\\
	&= \frac{1}{\pf(\bY)} \int_{0}^{1} e^{(1-s)\bY} \dot \bY e^{s\bY} \dd s
	- \frac{1}{\pf^{2}(\bY)} e^{\bY} \tr[\dot\bY e^{\bY}]
	\notag\\
	&= -\int_{0}^{1} \bX^{1-s} \bV_{\temp}(\bX) \bX^{s} \dd s
	+ \tr[\bX \bV_{\temp}(\bX)] \bX,
\end{flalign}
where the second equality is an application of Fréchet's derivative formula for matrix exponentials \cite{Hig08} and the last one follows by recalling that $\bX = \exp(\bY) / \tr[\exp(\bY)]$ so $\dot \bY = -\bV(\bX) - \temp ( \log\bX + \pf(\bY)\bI ) = -\bV_{\temp}(\bX) - \temp\pf(\bY) \bI$ by the definition of the dynamics \eqref{eq:DXL-cont}.
\end{proof}

With this explicit expression for the evolution of $\bX$ at hand, we are almost in a position to show that the perturbed objective function $\objmean_{\temp}(\bX) = \objmean(\bX) + \temp \tr[\bX \log\bX]$ of \eqref{eq:SP-temp} is a strict Lyapunov function for the dynamics \eqref{eq:DXL-primal}.
The only other result that we will need is the following Jensen-like inequality for positive-definite matrices:

\begin{lemma}
\label{lem:Jensen}
Consider Hermitian matrices $\bW, \bX\in\herm_{M}$ with $\bX\mg0$ and $\tr(\bX) = 1$.
Then, for all $s\in[0,1]$, we have $\tr(\bX^{1-s} \bW \bX^{s} \bW) \geq \tr(\bX\bW)^{2}$ with equality if and only if $\bW\propto \bI$.
\end{lemma}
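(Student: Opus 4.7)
The plan is to diagonalize $\bX$ and reduce the matrix inequality to a two-line computation involving a discrete probability distribution and the matrix entries of $\bW$ in the eigenbasis of $\bX$. Since $\bX\mg0$ is Hermitian with unit trace, write $\bX = \bU\bD\bU^{\dag}$ with $\bD = \diag(\lambda_{1},\dotsc,\lambda_{M})$, all $\lambda_{i}>0$ and $\sum_{i}\lambda_{i}=1$, and set $\wilde\bW = \bU^{\dag}\bW\bU$, which is Hermitian. Using the cyclic and invariance properties of the trace, I would rewrite both sides in terms of the entries $\wilde W_{ij}$ and $\lambda_{i}$:
\begin{equation*}
\tr(\bX^{1-s}\bW\bX^{s}\bW) = \sum_{i,j} \lambda_{i}^{1-s}\lambda_{j}^{s}\,|\wilde W_{ij}|^{2},
\qquad
\tr(\bX\bW) = \sum_{i} \lambda_{i}\wilde W_{ii},
\end{equation*}
where I use that the diagonal entries $\wilde W_{ii}$ are real (Hermiticity) and that $|\wilde W_{ij}|^{2}=\wilde W_{ij}\wilde W_{ji}$.

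Next I would split the left-hand sum into diagonal and off-diagonal parts. The diagonal part equals $\sum_{i}\lambda_{i}\wilde W_{ii}^{2}$, independent of $s$, and every off-diagonal term $\lambda_{i}^{1-s}\lambda_{j}^{s}|\wilde W_{ij}|^{2}$ is nonnegative. Hence
\begin{equation*}
\tr(\bX^{1-s}\bW\bX^{s}\bW) \geq \sum_{i}\lambda_{i}\wilde W_{ii}^{2}.
\end{equation*}
Now I would invoke Jensen's inequality (equivalently, Cauchy--Schwarz) applied to the probability vector $(\lambda_{i})$ and the real numbers $(\wilde W_{ii})$:
\begin{equation*}
\sum_{i}\lambda_{i}\wilde W_{ii}^{2} \;\geq\; \Bigl(\sum_{i}\lambda_{i}\wilde W_{ii}\Bigr)^{2} = \tr(\bX\bW)^{2},
\end{equation*}
which chains together to give the desired inequality.

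For the equality case, I would analyze the two inequalities separately. Equality in the off-diagonal bound forces $\wilde W_{ij}=0$ for all $i\neq j$ (since every $\lambda_{i}^{1-s}\lambda_{j}^{s}>0$ as $\bX\mg0$); so $\wilde\bW$ is diagonal, i.e. $\bW$ commutes with $\bX$. Equality in Jensen requires $\wilde W_{ii}$ to be constant across all $i$ (again using $\lambda_{i}>0$), so $\wilde\bW = c\bI$, hence $\bW = c\bI$. Conversely, $\bW\propto\bI$ trivially makes both sides equal to $c^{2}$. The only subtle point worth checking carefully is that the boundary values $s=0$ and $s=1$ are handled correctly by the same formula, which follows since $\lambda_{i}>0$ makes $\lambda_{i}^{s}$ continuous and positive on $[0,1]$; beyond that, the argument is routine and I do not anticipate any serious obstacle.
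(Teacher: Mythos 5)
Your proof is correct, but it takes a genuinely different route from the paper's. The paper proves the lemma in one stroke via the matrix Cauchy\textendash Schwarz inequality $\tr(\bA\bA^{\dag})\tr(\bB\bB^{\dag}) \geq \smallabs{\tr(\bA\bB^{\dag})}^{2}$ applied to $\bA = \bX^{1/2}$ and $\bB = \bX^{(1-s)/2}\bW\bX^{s/2}$, using $\tr(\bA\bA^{\dag}) = \tr(\bX) = 1$; the equality case $\bA \propto \bB$ then collapses to $\bW \propto \bI$ after multiplying through by negative powers of $\bX$ (legitimate since $\bX \mg 0$). You instead diagonalize $\bX$ and split the sum $\sum_{i,j}\lambda_{i}^{1-s}\lambda_{j}^{s}\smallabs{\wilde W_{ij}}^{2}$ into its diagonal part (handled by scalar Jensen on the probability vector $(\lambda_{i})$) and its off-diagonal part (discarded by nonnegativity). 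The paper's choice of $\bA,\bB$ packages both of your steps into a single inequality, which is slicker but makes the equality analysis slightly more opaque; your coordinate version is more elementary and cleanly separates the two equality constraints ($\bW$ diagonal in the eigenbasis of $\bX$, and constant diagonal), and it also makes explicit that the diagonal contribution is independent of $s$. Both arguments use the hypotheses $\bX \mg 0$ and $\tr(\bX)=1$ in the same essential way, and your handling of the endpoints $s=0,1$ and of the converse direction is fine.
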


\begin{proof}
Let $a=(1-s)/2$, $b=s/2$, and set $\bA = \bX^{1/2}$, $\bB = \bX^{a} \bW \bX^{b}$.
Then, the Cauchy\textendash Schwarz inequality for matrices gives $\tr(\bA \bA^{\dag}) \tr(\bB \bB^{\dag}) \geq \smallabs{\tr(\bA \bB^{\dag})}^{2}$ with equality iff $\bA\propto \bB$.
On the other hand, we also have $\tr(\bA\bA^{\dag}) = \tr\bX = 1$ and $\tr(\bB\bB^{\dag}) = \tr[\bX^{a} \bW \bX^{b} \bX^{b} \bW \bX^{a}] = \tr[\bX^{1-s} \bW \bX^{s} \bW]$, leading to the inequality:
\begin{equation}
1\cdot \tr[\bX^{1-s} \bW \bX^{s} \bW]
	\geq \abs{\tr[\bX^{1/2} \bX^{s/2} \bW \bX^{(1-s)/2}]}^{2}
	= \smallabs{\tr (\bX\bW)}^{2}
	= \tr(\bX \bW)^{2},
\end{equation}
where the last equality follows from the fact that $\tr(\bX \bW)$ is real (recall that $\bX$ is positive-definite while $\bW$ is Hermitian).
This inequality holds as an equality if and only if $\bX^{1/2} \propto \bX^{a} \bW \bX^{b}$ so, with $a+b = 1/2$, this last condition is equivalent to $\bW \propto \bI$, as claimed.
\end{proof}

With all this in hand, we obtain:

\begin{proposition}
\label{prop:Lyapunov}
Let $\bX(t)$ be an interior solution orbit of the continuous-time dynamics \eqref{eq:DXL-cont}.
Then, $\frac{d}{dt} \objmean_{\temp}(\bX(t)) \leq 0$ for all $t\geq0$, with inequality if and only if $\bV_{\temp}(\bX(t)) \propto \bI$ \textendash\ i.e. at interior stationary points of \eqref{eq:DXL-primal}.
\end{proposition}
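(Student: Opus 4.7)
The plan is to compute $\tfrac{d}{dt}\objmean_{\temp}(\bX(t))$ by the chain rule, substitute the primal dynamics \eqref{eq:DXL-primal} from Lemma~\ref{lem:DXL-primal}, and then apply the Jensen-type inequality of Lemma~\ref{lem:Jensen} to obtain the sign.

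First I would write $\grad\objmean_{\temp}(\bX) = \bV(\bX) + \temp(\log\bX + \bI) = \bV_{\temp}(\bX) + \temp\,\bI$, so that the chain rule gives
\begin{equation*}
\frac{d}{dt}\objmean_{\temp}(\bX(t))
    = \tr\!\big[\bV_{\temp}(\bX)\dot\bX\big] + \temp\,\tr[\dot\bX].
\end{equation*}
The key preliminary observation is that the trace constraint $\tr(\bX) = 1$ is preserved along \eqref{eq:DXL-primal}: using cyclicity, $\tr[\bX^{1-s}\bV_{\temp}(\bX)\bX^{s}] = \tr[\bX\bV_{\temp}(\bX)]$ for every $s$, so the two terms in \eqref{eq:DXL-primal} cancel in trace and $\tr[\dot\bX] = 0$. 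Hence the $\temp\,\bI$ contribution drops out and only $\tr[\bV_{\temp}(\bX)\dot\bX]$ remains.

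Next I would substitute \eqref{eq:DXL-primal} into this surviving term and again use cyclicity of the trace to get
\begin{equation*}
\tr[\bV_{\temp}(\bX)\dot\bX]
    = -\int_{0}^{1} \tr\!\big[\bX^{1-s}\bV_{\temp}(\bX)\bX^{s}\bV_{\temp}(\bX)\big]\dd s
    + \tr[\bX\bV_{\temp}(\bX)]^{2}.
\end{equation*}
Since $\bX(t)\mg0$ on an interior orbit and $\tr(\bX)=1$, Lemma~\ref{lem:Jensen} with $\bW = \bV_{\temp}(\bX)$ yields $\tr[\bX^{1-s}\bV_{\temp}\bX^{s}\bV_{\temp}] \geq \tr(\bX\bV_{\temp})^{2}$ for every $s\in[0,1]$. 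Integrating this pointwise inequality over $s$ gives $\tr[\bV_{\temp}(\bX)\dot\bX] \leq 0$, which is exactly the desired monotonicity.

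For the equality case, note that the integrand in the display above is continuous in $s$ and pointwise $\geq \tr(\bX\bV_{\temp})^{2}$, so the integral equals $\tr(\bX\bV_{\temp})^{2}$ if and only if equality holds in Lemma~\ref{lem:Jensen} for every $s\in[0,1]$; the lemma then forces $\bV_{\temp}(\bX)\propto\bI$. The converse is immediate: if $\bV_{\temp}(\bX) = c\bI$, then $\int_{0}^{1}\bX^{1-s}(c\bI)\bX^{s}\dd s = c\bX$ while $\tr[\bX\bV_{\temp}]\bX = c\bX$, so $\dot\bX = 0$ by \eqref{eq:DXL-primal} and a fortiori $\tfrac{d}{dt}\objmean_{\temp}(\bX) = 0$, identifying these points as the interior rest points of \eqref{eq:DXL-primal}. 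The only real subtlety to guard against is the interior hypothesis, which is needed both to make $\log\bX$ and the Jensen inequality meaningful; this is where I would flag that orbits starting in $\intr(\strat)$ stay in $\intr(\strat)$, a consequence of the exponential parametrization $\bX = \exp(\bY)/\tr[\exp(\bY)]$.
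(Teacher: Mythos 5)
Your proof is correct and follows essentially the same route as the paper's: the chain rule together with $\tr[\dot\bX]=0$, substitution of the primal dynamics from Lemma~\ref{lem:DXL-primal}, and the Cauchy\textendash Schwarz/Jensen bound of Lemma~\ref{lem:Jensen}, including the same characterization of the equality case. The only differences are cosmetic: you keep the signs consistent where the paper's displayed computation carries a harmless sign typo, and you spell out the converse direction ($\bV_{\temp}(\bX)\propto\bI$ implies $\dot\bX=0$) that the paper leaves implicit.
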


\begin{proof}
By a simple application of the chain rule, we readily get:
\begin{equation}
\dot\objmean_{\temp}
	= \tr[ \dot\bX \grad\objmean_{\temp}(\bX) ]
	= -\tr[ \dot\bX \cdot (\bV(\bX) + \temp(\log\bX + \bI)) ]
	= -\tr[ \dot\bX \bV_{\temp}(\bX)],
\end{equation}
where we have used the definition of $\bV_{\temp}$ and the fact that $\tr[\dot\bX] = 0$ (since $\tr[\bX(t)] = 1$ for all $t\geq0$).
Invoking Lemma \ref{lem:DXL-primal}, we then obtain
\begin{flalign}
\dot \objmean_{\temp}
	&= \int_{0}^{1} \tr\big[\bX^{1-s} \bV_{\temp}(\bX) \bX^{s} \bV_{\temp}(\bX) \big] \dd s
	- \tr[ \bX \bV_{\temp}(\bX) ]^{2}
	\notag\\
	&= \int_{0}^{1} \tr\big[\bX^{1-s} \bV_{\temp}(\bX) \bX^{s} \bV_{\temp}(\bX) \big]
	- \tr[ \bX \bV_{\temp}(\bX) ]^{2} \dd s,
\end{flalign}
and our assertion follows from Lemma \ref{lem:Jensen} above.
\end{proof}

As a corollary of the above, we then get:

\begin{corollary}
\label{cor:conv-cont}
For every initial condition $\bY(0)\in\herm_{M}$, the dynamics \eqref{eq:DXL-cont} converge to the unique solution $\eq_{\temp}$ of the perturbed optimization problem \eqref{eq:SP-temp}
\end{corollary}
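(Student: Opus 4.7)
The plan is to invoke LaSalle's invariance principle using Proposition \ref{prop:Lyapunov}, exploiting the strict convexity of \eqref{eq:SP-temp} to pin down the $\omega$-limit set to a single point. The key observation allowing this is that any solution $\bX(t)$ of \eqref{eq:DXL-cont} automatically remains in the relative interior $\strat^{\!\circ}$ of $\strat$: since $\bX(t) = \exp(\bY(t))/\tr[\exp(\bY(t))]$ and $\bY(t)\in\herm_{M}$ for all $t\geq0$, the matrix $\exp(\bY(t))$ is positive definite, so $\bX(t) \mg 0$ and Proposition \ref{prop:Lyapunov} is applicable throughout the orbit.

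Next, I would note that the orbit $\{\bX(t):t\geq0\}$ is pre-compact (it lies in the compact spectrahedron $\strat$) and that $t \mapsto \objmean_{\temp}(\bX(t))$ is non-increasing and bounded below, hence converges to some limit $L$. By continuity, every $\bX^{\ast}\in\omega(\bX(0))$ satisfies $\objmean_{\temp}(\bX^{\ast}) = L$, and since $\omega$-limit sets are invariant under the flow, $\objmean_{\temp}$ must be constant along the orbit through $\bX^{\ast}$. Proposition \ref{prop:Lyapunov} then forces $\bV_{\temp}(\bX^{\ast})\propto\bI$.

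The final step is to identify this set of critical points. Since $\objmean_{\temp}(\bX) = \objmean(\bX) + \temp\,h(\bX)$ with $h$ strictly convex on $\strat^{\!\circ}$ and $\objmean$ convex, the perturbed problem \eqref{eq:SP-temp} is strictly convex and admits a unique minimizer $\eq_{\temp}$. By the \ac{KKT} conditions for the trace-normalization constraint (the only binding constraint since $\eq_{\temp}\in\strat^{\!\circ}$), this minimizer is characterized by $\grad \objmean_{\temp}(\eq_{\temp}) = \bV_{\temp}(\eq_{\temp}) + \temp\bI \propto \bI$, which is exactly the condition $\bV_{\temp}(\eq_{\temp}) \propto \bI$ identified above. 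Thus $\omega(\bX(0)) = \{\eq_{\temp}\}$, giving $\bX(t)\to\eq_{\temp}$.

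The only mildly delicate point is ensuring that the $\omega$-limit argument passes cleanly despite the dynamics being posed in the unbounded $\bY$-variable: the Lyapunov function lives on compact $\strat$, but invariance of the $\omega$-limit set requires the flow to be complete on $\strat^{\!\circ}$. This is routine because the right-hand side of \eqref{eq:DXL-cont} is globally Lipschitz in $\bY$ on bounded sets and $\bY(t)$ itself stays bounded (the continuous-time analog of Lemma \ref{lem:bounded} follows immediately from $\frac{d}{dt}\smallnorm{\bY}^{2} \leq -2\temp\smallnorm{\bY}^{2} + 2\vbound\smallnorm{\bY}$, which keeps $\smallnorm{\bY(t)}$ eventually trapped below $\vbound/\temp$). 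Hence the flow is well-defined for all $t\geq 0$, the LaSalle argument applies, and the corollary follows.
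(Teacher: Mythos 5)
Your argument is correct and follows essentially the same route as the paper, which states the corollary as an immediate consequence of Proposition \ref{prop:Lyapunov} (the strict Lyapunov property of $\objmean_{\temp}$) together with the strict convexity and interiority of the unique solution of \eqref{eq:SP-temp}. Your write-up simply supplies the standard LaSalle/$\omega$-limit details that the paper leaves implicit, including the useful observation that boundedness of $\bY(t)$ keeps the orbit $\bX(t)$ in a compact subset of the interior of $\strat$.
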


Finally, we have:

\begin{proof}[Proof of Theorem \ref{thm:conv-main}]
By Proposition \ref{prop:APT}, the iterates of \eqref{eq:DXL} form an \acl{APT} of the continuous-time dynamical system \eqref{eq:DXL-cont}.
Since the objective function of the perturbed optimization problem \eqref{eq:SP-temp} is a strict Lyapunov function for the latter (Proposition \ref{prop:Lyapunov} coupled with the fact that any solution of \eqref{eq:SP-temp} is interior), our claim follows readily from standard stochastic approximation results \cite[Theorem 5.7]{Ben99}.
\end{proof}

From the proof of Theorem \ref{thm:conv-main}, we can identify two points where the positivity of $\temp$ plays a crucial role.
The first is the boundedness of the iterates $\bY(n)$ of the algorithm (Lemma \ref{lem:bounded}) which guarantees that \eqref{eq:DXL} is a stochastic approximation of the mean dynamics \eqref{eq:DXL-cont}.
The second is the fact that the problem \eqref{eq:SP-temp} admits a unique, interior solution.
In the limit case $\temp=0$, it is still possible to show that \eqref{eq:DXL} comprises an \acl{APT} of \eqref{eq:DXL-cont} but the Lyapunov argument of Proposition \ref{prop:Lyapunov} is more subtle.
Since we are only interested in algorithms with finite iterates (for computer arithmetic reasons), we will not press this issue further, delegating it instead to future work.

%%% MULTI-AGENT
%----------------------------------------------------------------------
\subsection{Distributed optimization in asynchronous multi-agent
  environments}
\label{ssec:async}

Of course, even though the information requirements of \eqref{eq:DXL} are relatively minimal (an imperfect oracle call to the gradient of the agent's stochastic objective), it is not clear whether it can be readily extended to a distributed optimization setting (or a game-theoretic context) where agents update independently of one another and there is often a delay between agent updates and observations.
To overcome these limitations, we examine here a fully decentralized variant of \eqref{eq:DXL} which addresses the issues above.

To make all this precise, we will work with the multi-agent stochastic optimization problem \eqref{eq:SSP-multi} and we will assume that the agents seek to converge to a solution thereof through repeated play.
To that end, let $n$ denote the $n$-th \emph{overall} update epoch in the system, let $\play_{n} \subset \play$ denote the subset of agents who update at this epoch (typically $\abs{\play_{n}} = 1$ if agents update at random times), and let $d_{k}(n)$ be the number of periods that have elapsed at period $n$ since the last update of agent $k$.
With all this in mind, we will focus on the following asynchronous variant of \eqref{eq:DXL}:
\begin{equation}
\label{eq:DXL-multi}
\begin{aligned}
\bY_{k}(n+1)
	&= \bY_{k}(n) - \step_{n_{k}} \one(k\in\play_{n}) \cdot \big( \hat\bV_{k}(n) + \temp \bY_{k}(n) \big),
	\\
\bX_{k}(n+1)
	&= \frac{\exp(\bY_{k}(n+1))}{\tr[ \exp(\bY_{k}(n+1)) ]},
\end{aligned}
\end{equation}
where $n_{k} = \sum_{j=1}^{n} \one(k\in\play_{j})$ denotes the number of updates performed by agent $k$ up to epoch $n$ while the (asynchronous) gradient estimate $\hat\bV_{k}(n)$ satisfies the unbiasedness assumption:
\begin{equation}
\label{eq:grad-multi}
\tag{\ref*{eq:zeromean}$'$}
\ex\big[ \hat\bV_{k}(n) \given \filter_{n} \big]
	= \bV_{k}(\bX_{1}(n - d_{1}(n)),\dotsc,\bX_{K}(n - d_{K}(n))),
\end{equation}
where, as before, $\bV_{k}(\bX_{1},\dotsc,\bX_{n}) = \grad_{\bX_{k}} \objmean(\bX_{1},\dotsc,\bX_{K})$.

\begin{algorithm}[t]
{%
\flushleft
\sf
\vspace{2pt}
Parameters:
	discount rate $\temp>0$;
	initial step-size $\step$.
	\\[2pt]
Initialize:
	$n \leftarrow 1$;
	$\bY \leftarrow 0$;
	$\bX \leftarrow \exp(\bY)/\tr[\exp(\bY)]$.
	\\[2pt]
\Repeat
{%
\texttt{UpdateEvent} occurs at time $\tau(n)$;
	\\[2pt]
	get gradient estimate $\hat\bV$;
	\\[2pt]
	update score matrix:
	$\bY \leftarrow \bY + \step/n \, \hat \bV$;
	\\[2pt]
	update primal variable:
	$\bX \leftarrow \exp(\bY) \big/ \tr[ \exp(\bY) ]$;
	\\[2pt]
	$n \leftarrow n+1$;
	\\[2pt]
\textbf{until} termination criterion is reached.
} % end Repeat
} % end sf
\caption{Asynchronous implementation of \eqref{eq:DXL}.}
\label{alg:DXL-multi}
\end{algorithm}

By definition, $\bY_{k}(n)$ and $\bX_{k}(n)$ are updated at the $(n+1)$-th update period if and only if $k\in\play_{n}$, so every agent follows his individual update timer, independently of what other agents in the system do (for a pseudocode implementation, see Algorithm \ref{alg:DXL-multi}).
Remarkably, in this completely decentralized context (with out-of-date and/or imperfect gradient observations), we still get:

\begin{theorem}
\label{thm:conv-multi}
Assume that the agents' delay process $d_{k}(n)$ are bounded \textup(a.s.\textup) and the set of agents $\play_{n}$ that updates at the $n$-th overall update epoch is a homogeneous recurrent Markov chain \textendash\ i.e. all agents update a strictly positive rate.
Assume further that Algorithm \ref{alg:DXL-multi} is run with step-sizes $\step_{n} \propto 1/n$ and imperfect gradient estimates $\hat\bV_{k}(n)$ satisfying the unbiasedness assumption \eqref{eq:grad-multi}.
Then, the algorithm's iterates converge \textup(a.s.\textup) to the \textup(unique\textup) minimizer of the perturbed objective $\objmean_{\temp}(\bX_{1},\dotsc,\bX_{K}) = \objmean(\bX_{1},\dotsc,\bX_{K}) + \temp \sum_{k=1}^{K} \tr[ \bX_{k} \log\bX_{k} ]$ over $\strat = \prod_{k=1}^{K} \strat_{k}$.

In particular, Algorithm \ref{alg:DXL-multi} converges within $\eps(\temp)$ of a solution of the distributed stochastic optimization problem \eqref{eq:SSP-multi} and the approximation error $\eps(\temp)$ vanishes as $\temp\to0^{+}$.
\end{theorem}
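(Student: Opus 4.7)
The plan is to reduce Theorem \ref{thm:conv-multi} to the single-agent analysis by showing that the asynchronous iterates \eqref{eq:DXL-multi} still comprise an \acl{APT} of a suitably defined mean continuous-time dynamical system, and then to exploit a multi-agent analogue of the Lyapunov argument already established in Proposition \ref{prop:Lyapunov} and Corollary \ref{cor:conv-cont}. Concretely, I would first extend Lemma \ref{lem:bounded} to the multi-agent setting: since the gradient estimates $\hat\bV_k(n)$ are uniformly bounded (a.s.) and the discount $\temp>0$ contracts the score matrix $\bY_k$ at each of $k$'s own updates, the same contraction-plus-drift argument shows that each $\bY_k(n)$ stays bounded almost surely. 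This gives us the ``free'' boundedness we need to invoke stochastic approximation machinery per agent.

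Next, I would set up the appropriate continuous-time system. Writing the mean vector field as
\begin{equation*}
\dot\bY_k = -\bV_k(\bX_1,\dotsc,\bX_K) - \temp\bY_k,
\qquad
\bX_k = \frac{\exp(\bY_k)}{\tr[\exp(\bY_k)]},
\end{equation*}
for $k=1,\dotsc,K$, the corresponding perturbed objective is $\objmean_{\temp}(\bX) = \objmean(\bX) + \temp\sum_k \tr[\bX_k\log\bX_k]$, which is strictly convex on $\strat = \prod_k \strat_k$ (since each entropy term is). A direct calculation paralleling Lemma \ref{lem:DXL-primal} yields that each $\bX_k(t)$ satisfies a replicator-like equation, and $\dot\objmean_{\temp} = -\sum_k \tr[\dot\bX_k\, \bV_{\temp,k}(\bX)]$. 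Applying Lemma \ref{lem:Jensen} blockwise then shows that $\objmean_{\temp}$ is a strict Lyapunov function whose unique minimum, being interior, is the unique $\omega$-limit of the flow. Hence the multi-agent analogue of Corollary \ref{cor:conv-cont} holds.

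The main obstacle is bridging the asynchronous, delayed, discrete recursion \eqref{eq:DXL-multi} with this synchronous continuous-time flow. The crucial design choice is that each agent uses its \emph{own} update counter $n_k$ in the step-size $\step_{n_k}$ (with $\step_{n_k}\propto 1/n_k$), which is exactly the setup of Borkar's asynchronous stochastic approximation framework (see \cite[Ch.~7]{Bor08}). Under the Markov recurrence hypothesis on $\play_n$, all agents update at strictly positive asymptotic frequencies $\nu_k>0$, so by the ergodic theorem $n_k/n \to \nu_k$ a.s. The individual step-size normalization absorbs these frequencies (rescaling only the effective clock of each agent by a constant), so after a linear time change each component aligns with the corresponding coordinate of the continuous-time field. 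The bounded delays $d_k(n)$ contribute an extra error of order $\step_n \cdot \max_k d_k(n) \to 0$ and are absorbed into the martingale-plus-vanishing-perturbation decomposition that drives the APT argument. I would thus verify the hypotheses of Borkar's asynchronous SA theorem to conclude that the iterates $\bY_k(n)$ track the mean flow asymptotically.

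Once the APT property is established, convergence follows by the same template as in the proof of Theorem \ref{thm:conv-main}: the bounded, asymptotic pseudo-trajectory must converge to the internally chain transitive set of the mean flow, which by the strict Lyapunov property collapses to the unique minimizer $\eq_{\temp}$ of $\objmean_{\temp}$ on $\strat$. Finally, the same perturbation argument used after Theorem \ref{thm:conv-main} shows that $\eq_{\temp}$ lies within $\eps(\temp)$ of a solution of \eqref{eq:SSP-multi}, with $\eps(\temp)\to0$ as $\temp\to0^+$, completing the proof.
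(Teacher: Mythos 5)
Your proposal is correct and follows essentially the same route as the paper: both invoke Borkar's asynchronous stochastic approximation framework to pass from the delayed, asynchronous recursion \eqref{eq:DXL-multi} to a limiting ODE, and both conclude by observing that the perturbed potential $\objmean_{\temp}$ remains a strict Lyapunov function (your blockwise application of Lemma \ref{lem:Jensen} is exactly the ``easy calculation'' the paper alludes to). The only, inconsequential, divergence is that the paper identifies the limiting dynamics as the rate-adjusted system $\dot\bY_{k} = -\rho_{k}\left[\bV_{k} + \temp\bY_{k}\right]$ with $\rho_{k} = \lim_{n} n_{k}/n > 0$, whereas you argue the local-clock step sizes cancel these rates; either way the positive diagonal rescaling leaves the rest points and the strict Lyapunov property of $\objmean_{\temp}$ unchanged, so the conclusion is identical.
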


\begin{proof}
Following Theorems 2 and 3 in \cite{Bor08}, the asynchronous recursion \eqref{eq:DXL-multi} may be seen as a stochastic approximation of the rate-adjusted dynamics:
\begin{equation}
\dot\bY_{k}
	= -\rho_{k} [\bV_{k} + \temp\bY_{k}],
\end{equation}
where $\rho_{k} = \lim_{n\to\infty} n_{k}/n > 0$ is the asymptotic update rate of user $k$ (the existence and positivity of this limit follows from the ergodicity assumption on the set-valued process $\play_{n}$).
This multiplicative factor does not alter the rest points of the original dynamics \eqref{eq:DXL-cont} and an easy calculation shows that the perturbed objective function $\objmean_{\temp}(\bX_{1},\dotsc,\bX_{K})$ remains a strict Lyapunov function for the rate-adjustment dynamics above.
The rest of our proof then follows essentially as that of Theorem \ref{thm:conv-main}.
\end{proof}

%----------------------------------------------------------------------
%%% APPLICATIONS
%----------------------------------------------------------------------
\section{Applications to Wireless Networks}
\label{sec:applications}
%----------------------------------------------------------------------
%%% APPLICATIONS
%----------------------------------------------------------------------
% !TEX root = ./peis-arxiv.tex

We now turn to a concrete application of the algorithmic framework presented in the previous sections to distributed throughput maximization in multi-user wireless systems.
%In this way, the material in this section may also be seen as an \emph{a posteriori} motivation for the core formulation of Section \ref{sec:problem}.

\subsection{Problem formulation}
\label{sec:model}

Throughout this section, we will focus on mobile systems where a set $\play = \{1,\dotsc,K\}$ of different \emph{transmitters} (or \emph{users}) communicate simultaneously with a single \emph{receiver} (for instance, a base station or a wireless terminal).
Following recent developments in wireless communication technology \cite{3GPP,3GPP-UE,3GPP-BS}, we will further assume that each user $k\in\play$ is using $\tx_{k}$ antennas for transmission (multiplexing) while the receiver is using $\rx$ antennas for signal reception and decoding.
More precisely, what this means is that the aggregate signal reaching the receiver can be described by the standard channel model
\begin{equation}
\label{eq:signal}
\by
	= \sum_{k=1}^{K} \bH_{k} \bx_{k} + \bz
\end{equation}
where:
\begin{enumerate}
\item
$\by\in\C^{\rx}$ is the aggregate signal reaching the receiver (the channel's \emph{output}).
\item
$\bx_{k} \in \C^{\tx_{k}}$ is the transmitted signal (input) of the $k$-th transmitter (the channel's \emph{input}).
\item
$\bH_{k} \in \C^{\rx\times\tx_{k}}$ denotes the transfer matrix between the $k$-th transmitter and the receiver, representing how the transmit signal is affected by the wireless medium.
To account for channel fading, we will assume in what follows that the users' channel matrices evolve over time following a bounded stationary process \cite{GV97} and we will denote expectations over this distribution by $\ex[\argdot]$.%
\footnote{As a special case, in the static channel regime, we will assume that this process is, in fact, deterministic.}
\item
$\bz\in\C^{\rx}$ is the noise in the channel (including thermal, atmospheric and other peripheral interference effects).
Following standard information-the\-o\-re\-tic caveats, we will further assume that $\bz$ can be modeled as a circularly symmetric, zero-mean Gaussian vector with unit covariance \cite{Tel99,CV93,YRBC04}.
\end{enumerate}

This \ac{MIMO} \ac{MAC} model has attracted considerable interest in the literature \cite{Tel99,CV93,YRBC04,SPB08-jsac,SPB09-sp,BLDH10,PCL03,BLD09} and it is well known that the users' maximum transmission rate is achieved using random Gaussian codes for signal encoding.%
\footnote{Depending on the structure of the channel matrices $\bH_{k}$, the channel model \eqref{eq:signal} actually applies to several telecommunications systems, ranging from \ac{DSL} uplink networks with T{\oe}plitz circulant $\bH_{k}$, to \ac{CDMA}  radio networks \cite{SCS99}.
For concreteness, we will stick here with the interpretation of the signal model \eqref{eq:signal} as an ad hoc multi-user \ac{MIMO} \acl{MAC} with $\bH_{k}$ representing the channel of each link.}
Specifically, let
\begin{equation}
\label{eq:cov}
\bQ_{k}
	= \ex_{\textrm{cb}}[\bx_{k}\bx_{k}^{\dag}]
\end{equation}
denote the covariance matrix of the transmitters' input signal distribution, with the expectation $\ex_{\textrm{cb}}$ being taken over the user's input codebooks.%
\footnote{Importantly, the expectation $\ex_{\textrm{cb}}[\argdot]$ is not related to the expectation $\ex[\argdot]$ taken over the distribution of the users' channels.}
Then, assuming perfect \ac{CSI} at the receiver, the maximum achievable information transmission rate of user $k$ will be given by the familiar expression \cite{Tel99,GV97}:
\begin{equation}
\label{eq:rate}
\rate_{k}(\bQ)
	= \txs
	\ex\left[
	\log\det\left( \bI + \sum_{\ell} \bH_{\ell} \bQ_{\ell} \bH_{\ell}^{\dag} \right)
	- \log\det \bW_{-k}(\bQ_{-k})
	\right],
\end{equation}
where $\bQ = (\bQ_{1},\dotsc,\bQ_{K})$ denotes the users' covariance profile,
$\bQ_{-k}$ is the corresponding profile for all users except $k$,
the expectation $\ex[\argdot]$ is taken over the users' channel law,
and
\begin{equation}
\label{eq:MUI}
\bW_{-k}(\bQ_{-k})
	= \bI + \insum_{\ell\neq k} \bH_{\ell} \bQ_{\ell} \bH_{\ell}^{\dag}
\end{equation}
denotes the \ac{MUI} of user $k$.%
\footnote{From an information-theoretic perspective, we are also assuming \ac{SUD} and perfect \ac{CSI} at the receiver;
for a more detailed account, see e.g. \cite{Tel99,YRBC04,SPB08-jsac,SPB08i-sp,SPB08ii-sp}.}

In this context, the users' objective is to select input signal covariance matrices $\bQ_{k}$ so as to maximize their individual information transmission rate $\rate_{k}(\bQ)$ subject to the constraints
\begin{equation}
\label{eq:power}
\tr(\bQ_{k})
%	= \ex_{\textrm{cb}}[\norm{\bx_{k}}^{2}]
	= \pmax_{k},
\end{equation}
where $\pmax_{k}$ is the transmit power of user $k$ \cite{Tel99,YRBC04}.
More formally, in the language of Section \ref{sec:multi}, the above boils down to the \emph{rate maximization game}:
\begin{equation}
\label{eq:RM}
\tag{RM}
\begin{aligned}
\textrm{maximize unilaterally}
	&\quad
	\pay_{k}(\bX)
	\quad
	\text{for all $k\in\play$},
	\\
\textrm{subject to}
	&\quad
	\bX_{k}\mgeq0,\;
	\tr(\bX_{k}) = 1,
\end{aligned}
\end{equation}
where, for convenience, we have set $\bX_{k} = \bQ_{k}/\pmax_{k}$ and the users' \emph{utility function} $\pay_{k}$ is simply defined as:
\begin{equation}
\pay_{k}(\bX_{1},\dotsc,\bX_{K})
	= \rate_{k}(\pmax_{1}\bX_{1},\dotsc,\pmax_{K}\bX_{K}).
\end{equation}

Clearly, in the presence of fading, the users' objectives are stochastic in nature because of the expectation over $\bH$ in \eqref{eq:rate};
otherwise, in the case of static channels, this expectation is trivial, so \eqref{eq:RM} is deterministic.
As a result, the game-theoretic problem \eqref{eq:RM} can be seen as a special case of the multi-agent stochastic problem \eqref{eq:SSP}.
%To see this, we first let $\bX_{k} = \bQ_{k}/\pmax_{k}$ denote the normalized covariance matrix of user $k$, so the feasible region of user $k$ may be written as $\strat_{k} \equiv \{\bX_{k}\in\herm_{\tx_{k}} : \tr(\bX_{k}) = 1 \}$.
Indeed, as was shown in \cite{BLDH10}, the users' reward functions $\pay_{k}$ satisfy the potential property \cite{MS96}:
\begin{equation}
\label{eq:pot-rate}
\pay_{k}(\bX_{k};\bX_{-k}) - \pay_{k}(\bX_{k}';\bX_{-k})
	= -\big[ \pot(\bX_{k};\bX_{-k}) - \pot(\bX_{k}';\bX_{-k}) \big]
%	\quad
%	\text{for all $\bQ_{k},\bQ_{k}'\in\strat_{k}$, $\bQ_{-k}\in\strat_{-k}$,}
\end{equation}
where the game's potential function $\pot$ is defined as:
\begin{equation}
\label{eq:pot}
\pot(\bX)
	= \txs
	-\ex\left[ \log\det\left( \bI + \sum_{k} \pmax_{k} \bH_{k} \bX_{k} \bH_{k}^{\dag} \right) \right]
\end{equation}
and the problem's feasible region is $\strat = \prod_{k}\strat_{k}$ with $\strat_{k} = \{\bX_{k} \in \herm_{\tx_{k}} : \bX_{k} \mgeq0 \text{ and } \tr(\bX_{k})=1 \}$.%
\footnote{From an information-theoretic perspective, $\pot$ simply represents  (minus) the users' sum rate under a centralized \ac{SIC} decoding scheme \cite{YRBC04}.
As a result, the above rate maximization problem can be seen both as a game (under \acl{SUD}) or as a distributed, multi-agent optimization problem (under more sophisticated \ac{SIC} schemes).
For a more detailed discussion, see \cite{MBM12,CGM14,BLDH10,MB14,MM15} and references therein.}

Since the function $\bM\mapsto\log\det (\bI + \bM)$ is concave in $\bM$ over the entire cone of positive-semidefinite matrices \cite{BV04}, it follows that $\pot$ is itself convex over $\strat$.%
\footnote{In fact, if the law of the users' channel matrices does not contain any atoms, $\pot$ is actually \emph{strictly} convex.}
Accordingly, the rate maximization game \eqref{eq:RM} falls squarely in the framework of Section \ref{sec:multi}:
in realistic network situations, the distribution of the users' channel matrices is not known to the users, so the rate functions $\rate_{k}$ (or the game's potential $\pot$) cannot be calculated \emph{a priori}.
As a result, to reach a Nash equilibrium of \eqref{eq:RM}, the system's users cannot rely on gradient observations of $\rate_{k}$ (or $\pot$), but only on stochastic (and possibly imperfect and/or delayed) information on the quantities inside the expectation of \eqref{eq:rate} and \eqref{eq:pot}, themselves obtained through an interplay between the transmitters and the receiver.

%We will see that  this problem can be seen  as a version of \ref{eq:SP} that falls in the case described by Remark \ref{rem:DetF}, where the  objective function is deterministic but the estimates of the gradient are noisy and possibly delayed.
The framework described above naturally calls for a distributed solution method, so the algorithmic material of Section \ref{sec:analysis} seems particularly well-suited to the occasion.
%The main practical question in this case that is not addressed by the general framework is how to get estimations of the gradients $V_k$ of the objective function in this environment.
%As explained in the Section \ref{sec:est}, this estimation will be obtained using an interplay between the transmitters and the receiver.
In the rest of this section, we will describe the specifics of this application.

\subsection{Algorithmic implementation \textendash\ synchronous updates}

The first step required to apply the algorithmic tools of Section \ref{sec:analysis} is to calculate the stochastic gradient of the users' rate functions $\rate_{k}$.
To that end, let
\begin{equation}
r_{k}(\bX)
	= \log\det(\bI + \insum_{\ell} \pmax_{\ell} \bH_{\ell} \bX_{\ell} \bH_{\ell}^{\dag})
	- \log\det(\bI + \insum_{\ell \neq k} \pmax_{\ell} \bH_{\ell} \bX_{\ell} \bH_{\ell}^{\dag}),
\end{equation}
so $\pay_{k}(\bX) = \ex[r_{k}(\bX)]$.
Then, some matrix calculus readily yields
\begin{equation}
\grad_{\bX_{k}} r_{k}(\bX)
	= \bH_{k}^{\dag} \bW^{-1}(\bX;\bH) \bH_{k}^{\dag}, 
\end{equation}
where, in a slight abuse of notation,
\begin{equation}
\bW(\bX;\bH)
	= \txs
	\bI + \sum_{\ell} \pmax_{\ell} \bH_{\ell} \bX_{\ell} \bH_{\ell}^{\dag}
\end{equation}
denotes the aggregate signal covariance matrix at the receiver.
Thus, if $\bH(n)$ denotes the realization of the users' channel matrices at each update period $n=1,2,\dotsc$, and $\bX(n)$ is their corresponding transmit profile, we will assume that
\begin{inparaenum}
[\itshape a\upshape)]
\item
$\bH_{k}(n)$ is measured at each transmitter $k\in\play$;
and
\item
$\bW(\bX(n);\bH(n))$ is measured at the receiver and is then broadcast to the transmitters.
%\footnote{This can be easily achieved, for instance via a dedicated radio channel or via pilot exchange reciprocity \cite{SPB08-jsac}.}
\end{inparaenum}
Under these assumptions, each transmitter $k\in\play$ can recreate their individual (stochastic) gradient matrices at period $n$ as:
\begin{equation}
\label{eq:V-rate}
\hat\bV_{k}(n)
	= \bH_{k}^{\dag}(n) \, \bW^{-1}(\bX(n);\bH(n)) \, \bH_{k}^{\dag}(n),
\end{equation}
and, by construction, we will have:
\begin{equation}
\ex\big[ \hat\bV_{k}(n) \big]
	= \grad_{\bX_{k}(n)} \rate_{k}(\bX(n)),
	\quad
	\text{for all $k=1,\dotsc,K$}.
\end{equation}

With this in mind, Algorithm \ref{alg:DXL} provides the following rate maximization algorithm with \ac{SU}:

\begin{algorithm}[H]
{%
\flushleft
\sf
	\vspace{2pt}
	Parameters:
	discount parameter $\temp>0$;
	decreasing step-size sequence $\step_{n}$.
	\\
	$n \leftarrow 1$;
	\\[2pt]
\ForEach
	{transmitter $k \in \play$}
	{initialize Hermitian score matrix $\bY_{k}\in\herm_{\tx_{k}}$;}
\Repeat
	{$n \leftarrow n+1$;
	\\[2pt]
	Receiver measures and broadcasts $\bP = \bW^{-1}$;
	\\[2pt]
\ForEach
	{transmitter $k \in \play$}
	{\vspace{1ex}
	Measure channel matrix $\bH_{k}$;
	\\[2pt]
	Update score matrix
	$\bY_k \leftarrow \bY_{k} + \step_{n} \left( \bH_{k} \bW^{-1} \bH_{k}^\dag  - \temp\bY_{k} \right)$;
	\\[2pt]
	Update covariance matrix
	$\dis\bX_{k} \leftarrow \frac{\exp(\bY_{k})}{\tr[\exp(\bY_{k})]}$.
	} % end ForEach
%\vspace{.5ex}
\textbf{until} termination criterion is reached.
} % end Repeat
} % end sf
\caption{\ac{MIMO} rate maximization with synchronous updates (\ac{SU})}
\label{alg:DXL-SU}
\end{algorithm}

From the point of view of distributed implementation, Algorithm \ref{alg:DXL-SU} has the following desirable properties:
\begin{enumerate}[(P1)]
\item
It is \emph{distributed}: users only update their individual variables using the same information as in distributed water-filling (namely the broadcast of $\bW^{-1}$) \cite{YRBC04,SPB08-jsac,SPB08i-sp,SPB08ii-sp}.
\item
It is \emph{stateless}:
users do not need to know the state of the system (or the existence of other users).
\item
It is \emph{reinforcing}:
users tend to increase their individual transmission rates $\pay_{k}$.
\item
It is \emph{stable}:
the matrix exponentials can be calculated in a numerically stable and efficient manner \cite{Moler2003}.
%\item
%Owing to the robustness of the discretization \eqref{eq.SA} ensured by Theorem \ref{thm.convergence.discrete}, the convergence to an approximate optimum is ensured even in the case of imperfect  estimation of $\bH$ and $\bW$.
%\item
%The algorithm can track the system evolution via the re-estimation of $\bH$ and $\bW$. 
\end{enumerate}
Furthermore, since the users' channels are bounded by necessity, Theorem \ref{thm:conv-main} readily yields:

\begin{corollary}
\label{cor:conv-rate-SU}
Assume that Algorithm \ref{alg:DXL-SU} is run with a step-size sequence $\step_{n}$ such that $\sum_{n=1}^{\infty} \step_{n}^{2} < \sum_{n=1}^{\infty} \step_{n} = \infty$.
Then, the algorithm's iterates converge \textup(a.s.\textup) within $\eps(\temp)$ of a Nash equilibrium of the rate maximization game \eqref{eq:RM} and the approximation error $\eps(\temp)$ vanishes as $\temp\to0^{+}$.
\end{corollary}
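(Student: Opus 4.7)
The plan is to cast Algorithm \ref{alg:DXL-SU} as a concrete instance of the abstract scheme \eqref{eq:DXL} applied to a distributed convex optimization problem, and then invoke Theorem \ref{thm:conv-main}. The first step is to reduce the equilibrium problem for \eqref{eq:RM} to the minimization of the potential $\pot$ defined in \eqref{eq:pot}. Indeed, by the potential property \eqref{eq:pot-rate}, every unilateral deviation of user $k$ changes $\pay_k$ and $-\pot$ by the same amount; since $\bM\mapsto\log\det(\bI+\bM)$ is concave on the cone of positive-semidefinite matrices, $\pot$ is convex over $\strat = \prod_k \strat_k$, so its minimizers coincide exactly with the Nash equilibria of \eqref{eq:RM}. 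This recasts \eqref{eq:RM} as a multi-agent instance of \eqref{eq:SSP-multi} with mean objective $\objmean = \pot$ and individual gradients $\bV_k(\bX) = -\grad_{\bX_k} \pay_k(\bX) = \grad_{\bX_k} \pot(\bX)$.

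Next, I would verify that, with this identification, Algorithm \ref{alg:DXL-SU} is precisely the synchronous block-diagonal realization of \eqref{eq:DXL}. Writing the update in the form of \eqref{eq:DXL} for each block $\bX_k$, the only thing to check is that the matrices $\hat\bV_k(n) = \bH_k^\dag(n)\,\bW^{-1}(\bX(n);\bH(n))\,\bH_k(n)$ furnished by the receiver's broadcast of $\bW^{-1}$ constitute an unbiased and uniformly bounded estimate of $-\grad_{\bX_k} \pot(\bX(n))$. Unbiasedness is immediate from the interchange rule \eqref{eq:grad-mean}, valid here because $r_k$ is smooth and $\bH_k$ is bounded: taking expectations of $\hat\bV_k(n)$ over the channel law at fixed $\bX(n)$ yields $\grad_{\bX_k} \rate_k(\bX(n))$, which equals $-\grad_{\bX_k} \pot(\bX(n))$ by the potential property. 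For uniform boundedness, observe that $\bW(\bX;\bH) \mgeq \bI$ so $\bW^{-1} \mleq \bI$; combined with the standing assumption that the channel matrices $\bH_k$ evolve in a bounded set, this gives a deterministic norm bound on $\hat\bV_k(n)$, which is exactly what Assumption \ref{asm:zeromean} requires.

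With these two checks in place, the corollary reduces to an application of Theorem \ref{thm:conv-main} to the (strictly) convex problem of minimizing $\pot_\temp(\bX) = \pot(\bX) + \temp\sum_k \tr[\bX_k \log \bX_k]$ on $\strat$: the iterates converge almost surely to the unique minimizer $\eq_\temp$ of $\pot_\temp$, which by standard perturbation arguments lies within some $\eps(\temp)$ of the minimizers of $\pot$ (equivalently, the Nash equilibria of \eqref{eq:RM}), and $\eps(\temp)\to 0$ as $\temp\to 0^+$. The step-size assumption $\sum_n \step_n = \infty$, $\sum_n \step_n^2 < \infty$ is directly inherited.

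The only mildly delicate point I anticipate is the uniform $\ell^\infty$ bound on $\hat\bV_k$; the rest is bookkeeping, since the multi-agent synchronous case collapses to a single-agent \eqref{eq:DXL} recursion on the product spectrahedron once we package the $\bY_k$ into a block-diagonal score matrix. The Lyapunov analysis of Proposition \ref{prop:Lyapunov} extends verbatim to $\pot_\temp$ with block structure, since both the trace and the matrix exponential decouple across blocks.
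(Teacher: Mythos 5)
Your proposal is correct and follows essentially the same route as the paper: the paper establishes the potential structure of \eqref{eq:RM}, the convexity of $\pot$, and the unbiasedness of $\hat\bV_{k}$ in the preceding subsections, and then derives the corollary directly from Theorem \ref{thm:conv-main} with the one-line observation that boundedness of the channels gives the uniform bound required by Assumption \ref{asm:zeromean}. Your write-up merely makes explicit the bookkeeping (sign conventions, $\bW\mgeq\bI$, block decoupling of the Lyapunov argument) that the paper leaves implicit.
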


\subsection{Asynchronous implementation}
\label{sec:async}

Let us now consider a more realistic wireless environement where the transmitters do not share a common update clock \textendash\ so synchronous decisions are not possible.
In this context, the synchronous update structure of Algorithm \ref{alg:DXL-SU} is no longer appropriate, so we will employ Algorithm \ref{alg:DXL-multi} (which is fully decentralized) instead.

To that end, assume that each transmitter is equipped with an individual timer $\tau_{k}$ whose ticks indicate the update events of user $k$.
More precisely, we assume here that $\tau_{k}\from\N\to\R_{+}$ is an increasing (and possibly random) sequence such that $\tau_{k}(n)$ marks the instance at which the $k$-th user updates his covariance matrix $\bX_{k}$ for the $n$-th time \textendash\ so $\bX_{k}$ does not change between $\tau_{k}(n)$ and  $\tau_{k}(n+1)$.
Similarly, we assume that the receiver is equipped with a timer $\tau_{0}(n)$ that triggers the measurements of
\begin{equation}
\bW(t)
	\equiv \bW(\bX(t);\bH(t))
	= \bI + \insum_{\ell} \pmax_{\ell} \bH_{\ell}(t) \bX_{\ell}(t) \bH_{\ell}^{\dag}(t).
\end{equation}
Thus,
at every tick of $\tau_{k}$, user $k$ measures $\bH_{k}$ and updates $\bX_{k}$
while,
at every tick of $\tau_{0}$, the receiver measures and broadcasts $\bW$.
This asynchronous operating mode fits naturally within the framework of Algorithm \ref{alg:DXL-multi}, leading in turn to the following implementation of Algorithm \ref{alg:DXL-SU} with \ac{AU}:

\begin{algorithm}[H]
{%
\flushleft
\sf
\vspace{2pt}
	Parameters:
	discount parameter $\temp>0$;
	\\[2pt]
	$n \leftarrow 1$;
	\\[2pt]
	Initialize Hermitian score matrix $\bY$.
	\\[2pt]
\Repeat
{%
\texttt{UpdateEvent} occurs at time $\tau(n)$;
	\\[2pt]
	$n \leftarrow n+1$;
	\\[2pt]
	Measure channel matrix $\bH$;
	\\[2pt]
	Recall latest broadcast of $\bW$;
	\\[2pt]
	Update score matrix
	$\bY \leftarrow \bY + \frac{1}{n} \left( \bH \bW^{-1} \bH^{\dag}  - \temp\bY \right)$;
	\\[2pt]
	Update covariance matrix
	$\dis\bX \leftarrow \frac{\exp(\bY)}{\tr[\exp(\bY)]}$;
	\\[2pt]
\textbf{until} termination criterion is reached.
} % end Repeat
} % end sf
\caption{\ac{MIMO} rate maximization with asynchronous updates (\ac{AU})}
\label{alg:DXL-AU}
\end{algorithm}

Algorithm \ref{alg:DXL-AU} is run independently by each transmitter \textendash\ though, of course, if all transmitters share a common timer, Algorithm \ref{alg:DXL-AU} reduces to the synchronous context of Algorithm \ref{alg:DXL-SU}.
Moreover, provided that all individual timers $\tau_{k}$ have positive finite rate (i.e. $\lim \tau_{k}(n)/n$ exists and is finite), it is easy to see that the update sequence generated by Algorithm \ref{alg:DXL-AU} satisfies the assumptions of Theorem \ref{thm:conv-multi}.
Indeed, the set-valued process $\play_{n}$ used in \eqref{eq:DXL-multi} to indicate the set of transmitters updating their covariance matrices at the $n$-th overall update event may be obtained from the users' individual timers $\tau_{k}$ as follows:
First, let $\play(t) = \{k\in\play: \tau_{k}(n) = t \text{ for some $n\in\N$}\}$ denote the set of players updating at time $t$ and let $n(t) = \card\{s\leq t: \play(s) \neq \varnothing\}$ be the total number of update epochs up to time $t$.
Then, $\play_{n} = \play(\inf\{t: n(t) \geq n\})$ and $n_{k}(n) = \sum_{r=1}^{n} \one(k \in \play_{r})$, so the limit $\lim \tau_{k}(n)/n$ exists and is finite if and only if the limit $\lim n_{k}(n)/n$ exists and is positive.
With all this in mind, we readily obtain:

\begin{corollary}
\label{cor:conv-rate-AU}
The iterates of Algorithm \ref{alg:DXL-AU} converge \textup(a.s.\textup) within $\eps(\temp)$ of a Nash equilibrium of the rate maximization game \eqref{eq:RM} and the approximation error $\eps(\temp)$ vanishes as $\temp\to0^{+}$.
\end{corollary}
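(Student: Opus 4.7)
The plan is to verify that Algorithm \ref{alg:DXL-AU} is a concrete instance of the abstract asynchronous scheme of Algorithm \ref{alg:DXL-multi} applied to the potential minimization problem associated with \eqref{eq:RM}, and then invoke Theorem \ref{thm:conv-multi} essentially as a black box. The bridge between the game and the optimization framework comes from the potential property \eqref{eq:pot-rate}: because the $\pay_{k}$ are aligned along the common potential $\pot$ of \eqref{eq:pot}, any Nash equilibrium of \eqref{eq:RM} corresponds to a critical point of $\pot$ on $\strat$, and convexity of $\pot$ means that these critical points are exactly the minimizers. Hence reaching a Nash equilibrium of \eqref{eq:RM} is equivalent to solving the multi-agent semidefinite program of minimizing $\pot$ over $\strat=\prod_{k}\strat_{k}$, which is precisely an instance of \eqref{eq:SSP-multi}.

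Next I would check the three hypotheses of Theorem \ref{thm:conv-multi}. First, unbiasedness: the quantity $\hat\bV_{k}(n)$ built in \eqref{eq:V-rate} from the locally measured $\bH_{k}$ and the latest broadcast of $\bW$ is, by the derivation preceding \eqref{eq:V-rate} and the interchange of gradient and expectation in \eqref{eq:grad-mean}, an unbiased estimator of $\grad_{\bX_{k}}\pot$ evaluated at the most recently available aggregate profile, which is exactly the form required by \eqref{eq:grad-multi}. Second, the step-size $\step_{n}\propto1/n$ is built into Algorithm \ref{alg:DXL-AU}. Third, for the update-timing hypothesis I would argue that if each individual clock $\tau_{k}$ has positive finite rate, then the induced set-valued process $\play_{n}$ (as constructed in the paragraph preceding the corollary) is a homogeneous recurrent process with $\lim n_{k}(n)/n=\rho_{k}>0$; the delays $d_{k}(n)$ between when $\bW$ is measured at the receiver and when it is used at the transmitter are bounded by the ratio of the tick rates of $\tau_{0}$ and $\tau_{k}$, which is almost surely finite under the positive rate assumption.

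With these three ingredients in place, Theorem \ref{thm:conv-multi} applies and gives a.s.\ convergence of $\bX(n)$ to the unique minimizer $\eq_{\temp}$ of the perturbed potential
\begin{equation*}
\pot_{\temp}(\bX) \;=\; \pot(\bX) \;+\; \temp\sum_{k=1}^{K}\tr[\bX_{k}\log\bX_{k}]
\end{equation*}
over $\strat$. To translate this into the statement about \eqref{eq:RM}, I would invoke the standard perturbation-lemma argument already used in the proof of Theorem \ref{thm:conv-main}: since the von Neumann entropies $\tr[\bX_{k}\log\bX_{k}]$ are uniformly bounded on the compact set $\strat$, the minimizer $\eq_{\temp}$ of $\pot_{\temp}$ lies within some distance $\eps(\temp)$ of the minimizer set of $\pot$, with $\eps(\temp)\to0$ as $\temp\to0^{+}$ by a standard $\Gamma$-convergence / continuity-of-argmin argument on the compact spectrahedron. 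Combining the minimizer set of $\pot$ with the potential-game characterization of Nash equilibria then yields the claim.

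The only real obstacle I expect is bookkeeping the asynchrony and delays: one has to be careful that at each tick of $\tau_{k}$ the transmitter uses the \emph{most recent} broadcast of $\bW$, so the gradient estimate is conditioned on a delayed but $\filter_{n}$-measurable profile; this is exactly what \eqref{eq:grad-multi} was designed to allow, provided the delays $d_{k}(n)$ remain a.s.\ bounded. Once that verification is done, the rest is a direct, essentially mechanical, specialization of Theorem \ref{thm:conv-multi} to the MIMO potential $\pot$.
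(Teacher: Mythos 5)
Your proposal is correct and follows essentially the same route as the paper: verify that the timer-driven updates of Algorithm \ref{alg:DXL-AU} realize the set-valued process $\play_{n}$ and delay/step-size hypotheses of Theorem \ref{thm:conv-multi}, then combine that theorem with the potential-game identification of Nash equilibria of \eqref{eq:RM} with minimizers of the convex potential $\pot$, and finally let $\temp\to0^{+}$. The extra detail you supply on the boundedness of the delays and the continuity-of-argmin step for $\eps(\temp)$ is exactly the bookkeeping the paper leaves implicit.
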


\subsection{Learning with imperfect information}
\label{sec:imperfect}

As a final application of the algorithmic framework of Section \ref{sec:analysis} to the problem at hand, we turn to the case where the users' channel matrices $\bH_{k}$ are static but channel and interference measurements are subject to observation noise and measurement errors.
In this case, the rate maximization game \eqref{eq:RM} becomes deterministic but the system is still subject to stochasticity originating from noise and uncertainty in the users' measurements.

In the perfect information case, the (deterministic) semidefinite problem \eqref{eq:RM} may be solved by water-filling techniques \cite{CV93}, properly adapted to multi-user environments \cite{YRBC04,SPB08-jsac,SPB06}.
Such methods can be either \emph{iterative} (with users updating their covariance matrices one after the other, in a round-robin fashion) \cite{YRBC04} or \emph{simultaneous} (with users updating all at once) \cite{SPB06}.
The benefit of the former (iterative) scheme is that its convergence is guaranteed \cite{YRBC04};
however, the algorithm's convergence rate is inversely proportional to the number of users in the system (making such methods unsuitable for large networks).
On the other hand, simultaneous water-filling methods are faster \cite{SPB06}, but their convergence is conditional on certain ``mild intereference'' conditions which fail to hold even in very simple $2\times2$ systems \cite{MBML12}.
Making matters worse, water-filling methods rely on perfect \ac{CSIT} and perfect measurements of the output signal covariance matrix $\bW$ at the receiver;
when it is impossible (or impractical) to obtain such noiseless measurements, it is not known whether water-filling methods converge.

%Moreover, in the absence of a centralized scheduler, enforcing simultaneous user updates is all but impossible, so it is not clear if methods that rely on synchronous decision-taking can be implemented in decentralized environments.

In light of the above, our goal here will be to provide a viable alternative to water-filling based on \eqref{eq:DXL}.
To that end, we will focus on two sources of measurement noise:
\begin{enumerate}
%[\itshape a\upshape)]
\item
The (static) transfer matrices $\bH_k$ can only be measured at the transmitter up to some random observational error.
\item
The receiver can only estimate the covariance $\bW$ of the aggregate received signal $\by$ via random sampling (assumed to occur between the updates of the transmitters).
\end{enumerate}
Even though these two randomness sources are independent of one another, the gradient matrices $\bV_{k} = \bH_{k} \bW^{-1} \bH_{k}^{\dag}$ of \eqref{eq:V-rate} depend nonlinearly on $\bH_{k}$ and $\bW$, so care must be taken to construct an unbiased estimator of $\bV_{k}$ from noisy estimates of $\bH_{k}$ and $\bW$.

We first consider the random perturbations induced on the estimation of $\bW^{-1}$ by signal sampling at the receiver end.
On that account, recall that $\bW$ is simply the covariance matrix of the aggregate received signal $\by\in\C^{\rx}$:
\begin{flalign}
\ex[\by \by^{\dag}]
	=\ex\big[\bz \bz^{\dag} \big] + \insum_{k} \bH_{k} \ex\big[\bx_{k} \bx_{k}^{\dag} \big] \bH_{k}^{\dag}
	= \bI + \insum_{k} \bH_{k} \bQ_{k} \bH_{k}^{\dag}
	= \bW.
\end{flalign}
%Consequently, the matrix $\bP \eqdef \bW^{-1}$ is simply the precision (inverse covariance) matrix of a multivariate Gaussian random variable.
As a result, an unbiased estimate for the covariance $\bW$ of $\by$ may be obtained from a systematically unbiased sample $\by_{1},\dotsc, \by_{S}$ of $\by$ by means of the classical estimator $\hat \bW = S^{-1} \insum_{s=1}^{S} \by_{s} \by_{s}^{\dag}$.%
\footnote{Since $\ex_{\textrm{cb}}[\by] = 0$, we do not need to include an $S/(S-1)$ bias correction factor in the estimate of $\bW$.
Also, in a slight abuse of notation, the measurement expectations here are taken with respect to the law of $\bx$, $\by$, and $\bz$.}

On the other hand, given that $\hat\bW^{-1}$ is a \emph{biased} estimator of $\bW^{-1}$ (and hence introduces a systematic error to the measurement process) \cite{And03}, we cannot use this classical covariance estimate for the received signal precision (inverse covariance) matrix $\bW^{-1}$.
Instead, following \cite{And03}, an unbiased estimate of the precision matrix $\bP = \bW^{-1}$ of $\by$ is given by the corrected expression:
\begin{equation}
\label{eq:Winv-unbiased}
\hat\bP
	= \frac{S - \rx -1}{S} \hat\bW^{-1},
\end{equation}
where $\hat\bW = S^{-1} \insum_{s=1}^{S} \by_{s} \by_{s}^{\dag}$ as before.
Thus, to obtain $\bW^{-1}$, the receiver only needs to take $S > \rx + 1$ independent measurements of $\by$ and then broadcast the unbiased estimate $\hat\bP$ of $\bW^{-1}$ to the network's users.

Similarly, in the absence of perfect \acl{CSIT}, the users must obtain an unbiased estimate of the unilateral gradient matrices $\bV_{k} = \bH_{k}^{\dag} \bW^{-1} \bH_{k}$ from the broadcasted value of $\bW$ and imperfect measurements of their channel matrices $\bH_{k}$.
However, an added complication here is that the estimated matrix $\hat\bV_{k}$ must be itself Hermitian \textendash\
%so that the discretization \eqref{eq.XLT.discrete} of \eqref{eq.XLT} remain well-defined
otherwise, $\bQ_{k}$ need not be positive-definite and the \ac{DXL} scheme may fail to be well-posed.
To accommodate this requirement, if each transmitter takes $S>1$ independent measurements $\hat\bH_{k,1},\dotsc,\hat\bH_{k,S}$ of their individual channel matrix $\bH_{k}$, such an estimator is given by the expression:
\begin{equation}
\label{eq:V-unbiased}
\hat\bV_{k}
	= \frac{1}{S(S-1)}
	\insum_{s\neq s'} \hat\bH_{k,s}^{\dag} \hat \bP \hat\bH_{k,s'},
\end{equation}
where $\hat\bP$ is the broadcast estimate \eqref{eq:Winv-unbiased} of $\bW^{-1}$.
Indeed, given that the sample measurements $\bH_{k,s}$ are assumed stochastically independent, we will have:
\begin{flalign}
\ex[\hat\bV_{k}]
	&= \frac{1}{S(S-1)}
	\insum_{s\neq s'} \ex\big[ \bH_{k,s}^{\dag} \hat \bP \bH_{k,s'}\big]
	\notag\\
	&= \frac{1}{S(S-1)} \insum_{s\neq s'} \ex[\hat\bH_{k,s}^{\dag}] \ex[\hat \bP] \ex[\hat \bH_{k,s}]
	= \bH_{k}^{\dag} \bW^{-1} \bH_{k},
\end{flalign}
where we have used the independence of the samples to decorrelate the expectations in the second equality, and we relied on the unbiasedness of $\hat\bP$ and $\hat\bH_{k}$ for the last one.
Thus, with $\ex[\hat\bV_{k}] = \bV_{k}$, our construction of an unbiased estimator for $\bV_{k}$ is complete.

From an implementation viewpoint, the above leads to the following distributed operation protocol.
First, with notation as in the previous section, let $\tau_{0}$ denote the receiver's measurement timer (so $\tau_{0}(n)$ is the $n$-th instance in time at which the receiver measures and broadcast $\bW^{-1}$).
Then, at each tick of $\tau_{0}$, the receiver takes a sample of the received signal $\by$ of size $S > \tx + 1$ and computes the estimate $\hat\bP(\tau_{0}(n))$ as above.%
\footnote{We implicitly assume here that this measurement process takes a negligible amount of time.
This assumption is justified by the fact that the characteristic time at which the receiver estimates $\bW$ for decoding purposes is much shorter than the interval between user updates.}
%Note however that at time $t_0(n)$, the number of updates of each transmitter
%may differ, so that $\hat\bQ$ is not an estimate of $\bW^-1$ after
%$t_0(n)$ updates, as in the synchronous case.
Likewise, if $\tau_{k}$ is the update timer of user $k$ (so $\tau_{k}(n)$ is the $n$-th update time for user $k$), each transmitter $k\in\play$ is assumed to measure his individual channel matrix and calculate his gradient estimate $\hat\bV_{k}$ using the recipe \eqref{eq:V-unbiased} with the latest broadcasted value of $\hat\bP$.%
\footnote{Obviously, if $\tau_{0} = \tau_{k}$ for all $k\in\play$, the above process boils down to the synchronous regime of Algorithm \ref{alg:DXL-SU}.}
Theorems \ref{thm:conv-main} and \ref{thm:conv-multi} then yield:

\begin{corollary}
\label{cor:conv-rate-noisy}
With notation as before, the iterates of Algorithm \ref{alg:DXL-AU} with imperfect feedback converge \textup(a.s.\textup) within $\eps(\temp)$ of a Nash equilibrium of the rate maximization game \eqref{eq:RM} with static channels;
moreover, the approximation error $\eps(\temp)$ vanishes as $\temp\to0^{+}$.
\end{corollary}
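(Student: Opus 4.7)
The plan is to reduce this corollary to Theorem \ref{thm:conv-multi} (or Theorem \ref{thm:conv-main} in the synchronous special case) by verifying its hypotheses for the noisy feedback mechanism built in the previous subsection. Since the potential function $\pot$ of \eqref{eq:pot} is convex over $\strat$ (this was recorded right after \eqref{eq:pot}), the rate maximization game \eqref{eq:RM} is equivalent to the distributed stochastic optimization problem \eqref{eq:SSP-multi} with mean objective $\objmean=\pot$ via the potential identity \eqref{eq:pot-rate}; consequently, every solution of the perturbed problem $\min_{\bX\in\strat}\bigl[\pot(\bX)+\temp\sum_{k}\tr(\bX_{k}\log\bX_{k})\bigr]$ lies within some $\eps(\temp)$ of the Nash equilibrium set of \eqref{eq:RM}, with $\eps(\temp)\to0$ as $\temp\to0^{+}$ by standard perturbation arguments (the perturbation vanishes uniformly on the compact set $\strat$).

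The main technical step is to verify that $\hat\bV_{k}$ as constructed in \eqref{eq:V-unbiased} is a legitimate gradient oracle in the sense of Assumption \ref{asm:zeromean} (more precisely, its multi-agent version \eqref{eq:grad-multi}). First, I would check that $\hat\bV_{k}$ is Hermitian: the double sum $\sum_{s\neq s'}\hat\bH_{k,s}^{\dag}\hat\bP\hat\bH_{k,s'}$ is manifestly symmetric in the pair $(s,s')$, and $\hat\bP$ is Hermitian by construction (as a scalar multiple of $\hat\bW^{-1}$, with $\hat\bW$ a sample covariance), so $\hat\bV_{k}^{\dag}=\hat\bV_{k}$. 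Second, I would re-derive the unbiasedness relation $\ex[\hat\bV_{k}\mid\filter_{n}]=\bH_{k}^{\dag}\bW^{-1}\bH_{k}=\bV_{k}(\bX(n))$: this is essentially the computation sketched above \eqref{eq:V-unbiased}, and it crucially exploits (i) the mutual independence of the $S$ channel samples $\hat\bH_{k,s}$ and the receiver's sample $\by_{1},\dotsc,\by_{S}$ (which lets us factor the expectation of a product into a product of expectations whenever $s\neq s'$), (ii) the unbiasedness of each $\hat\bH_{k,s}$, and (iii) the bias-corrected factor $(S-\rx-1)/S$ in \eqref{eq:Winv-unbiased} that renders $\hat\bP$ an unbiased estimator of $\bW^{-1}$ in the complex Wishart setting.

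Third, I would check the uniform boundedness requirement of Assumption \ref{asm:zeromean}. The deterministic channels $\bH_{k}$ are bounded by assumption, and the measurement noise on $\hat\bH_{k,s}$ is assumed bounded (otherwise one works in the mean-square relaxation noted in the remarks after Theorem \ref{thm:conv-main}). The aggregate covariance satisfies $\bW\mgeq\bI$, so the true $\bW^{-1}$ has operator norm at most $1$; for $S$ sufficiently large and sample noise small, $\hat\bP$ concentrates on $\bW^{-1}$ and can be assumed bounded almost surely (or one truncates, which preserves unbiasedness up to an asymptotically vanishing bias absorbed in $\eps(\temp)$). Combining these bounds gives an a.s.\ uniform bound on $\smallnorm{\hat\bV_{k}}$.

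Finally, with \eqref{eq:grad-multi} established and the asynchronous update structure of Algorithm \ref{alg:DXL-AU} matching the template of Algorithm \ref{alg:DXL-multi}, the rate-positivity and bounded-delay conditions of Theorem \ref{thm:conv-multi} are automatic from the assumption that each timer $\tau_{k}$ and $\tau_{0}$ has a positive finite rate (as was already noted for Corollary \ref{cor:conv-rate-AU}). Applying Theorem \ref{thm:conv-multi} to the potential $\pot$ then yields a.s.\ convergence to the unique minimizer $\eq_{\temp}$ of $\pot_{\temp}$, which by the opening paragraph lies within $\eps(\temp)$ of a Nash equilibrium of \eqref{eq:RM}, with $\eps(\temp)\to0$ as $\temp\to0^{+}$. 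The main obstacle, as flagged above, is the careful bookkeeping of independence in the oracle construction \eqref{eq:V-unbiased} to simultaneously guarantee unbiasedness, the Hermitian property, and uniform boundedness; everything else is a direct invocation of machinery already in place.
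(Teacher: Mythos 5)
Your proposal follows the same route as the paper: the paper offers no separate proof of this corollary beyond the construction in the preceding subsection (the bias-corrected precision estimate \eqref{eq:Winv-unbiased}, the Hermitian off-diagonal estimator \eqref{eq:V-unbiased}, and the independence-based unbiasedness computation), after which it simply invokes Theorems \ref{thm:conv-main} and \ref{thm:conv-multi}. Your additional checks (Hermitian symmetry of the double sum, uniform boundedness of the oracle) are correct and merely make explicit what the paper leaves implicit.
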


\subsection{Numerical results}
\label{sec:numerics}

To assess the performance of \eqref{eq:DXL} applied to realistic network conditions, we simulated in Fig.~\ref{fig:discount} a multi-user uplink \ac{MIMO} system consisting of a wireless base receiver with $5$ antennas and $K = 25$ transmitters, each with a random number $m_{k}$ of transmit antennas picked uniformly between $2$ and $6$.%
\footnote{For simplicity, throughout our numerical simulations, we focused on the \acl{SU} case (Algorithm \ref{alg:DXL-SU}).}
For the static channel case, each user's channel matrix $\bH_{k}$ was drawn from a complex Gaussian distribution at the outset of the transmission (but remained static once chosen), and Algorithm \ref{alg:DXL-SU} was ran with a large constant step size for different values of the discount parameter $\temp$.
The performance of the algorithm over time was then assessed by plotting the normalized efficiency ratio
\begin{equation}
\label{eq:efficiency}
\eff(n)
	= \frac{\pot_{\max} - \pot_{n}}{\pot_{\max} - \pot_{\min}},
\end{equation}
where $\pot_{n}$ denotes the users' sum rate at the $n$-th iteration of the algorithm, and $\pot_{\max}$ (resp. $\pot_{\min}$) is the maximum (resp. minimum) value of $\pot$ over the system's set $\strat$ of feasible covariance matrices.
Thus, by definition, an efficiency measure of $1$ corresponds to a Nash equilibrium of the rate maximization game \eqref{eq:RM} while an efficiency ratio of $0$ means that the system is very far from equilibrium.%
\footnote{The reason for using this efficiency measure instead of the user's sum rate $\pot$ directly, was to eliminate any scaling artifacts arising e.g. from $\pot$ taking values in a very narrow band close to its maximum value.}
In tune with Theorem \ref{thm:conv-main}, Fig.~\ref{fig:discount} reveals that Algorithm \ref{alg:DXL-SU} converges within a few iterations (effectively, within a single iteration for low $\temp$), but the end value of the users' sum rate deteriorates for higher values of the discount parameter $\temp$.

\begin{figure}[t]
\centering
\small
\includegraphics[width=.8\textwidth]{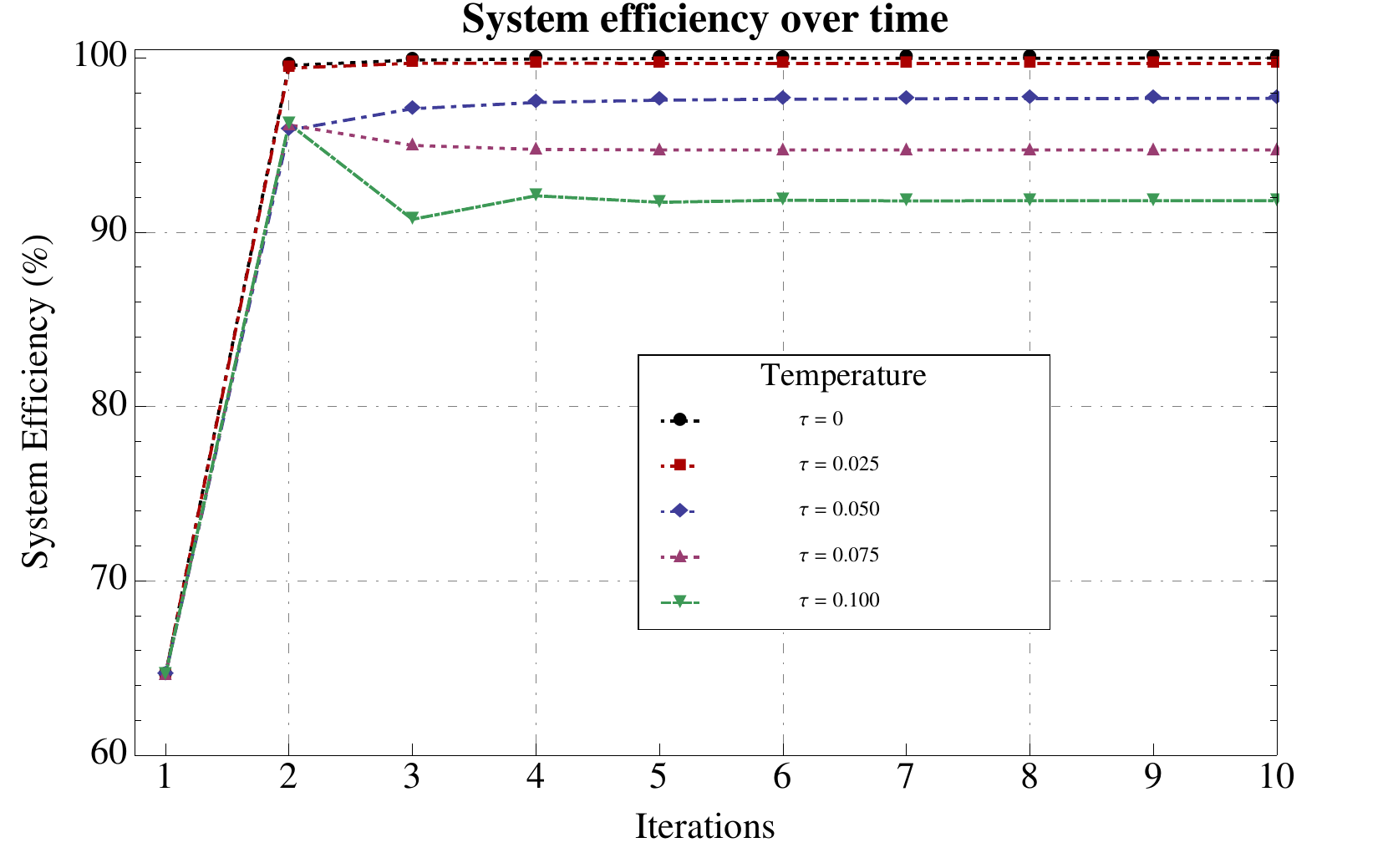}
\caption{The effect of the discount parameter $\temp$ on the end-state of the \acl{DXL} algorithm (Algorithm \ref{alg:DXL-SU}).}
\label{fig:discount}
\end{figure}

In Fig.~\ref{fig:convspeed}, we fix the algorithm's discount parameter to a low level ($\temp = 10^{-3}$) that ensures effective convergence to Nash equilibrium, and we investigate the algorithm's convergence speed as a function of the number of transmitters, using existing water-filling methods as a benchmark.
Specifically, in Fig.~\ref{fig:users}, we ran Algorithm \ref{alg:DXL-SU} for a multi-user uplink \ac{MIMO} system with $K = 10$, $25$, $50$ and $100$ users using a large, constant step size;
as a result of this parameter tuning, Algorithm \ref{alg:DXL-SU} effectively attains the system's sum capacity within one or two iterations, even for large numbers of users.
Importantly, as can be seen in Fig.~\ref{fig:WF}, this represents a marked improvement over water-filling methods, even in moderately-sized systems with $K = 25$ users:
on the one hand, \ac{IWF} \cite{YRBC04} is significantly slower than \ac{SU} (it requires $\bigoh(K)$ iterations to achieve the same performance level as the first iteration of Algorithm \ref{alg:DXL-SU}), whereas \ac{SWF} \cite{Scu06} fails to converge altogether.
%as a result of the users' optimum transmit directions turning out to be anti-aligned in the simulated channel realization.

\begin{figure*}[t]
%\vspace{-10pt}
\centering
\small
\subfigure
[Convergence speed of Algorithm \ref{alg:DXL-SU} for different numbers of users.]
{\label{fig:users}
\includegraphics[width=.48\textwidth]{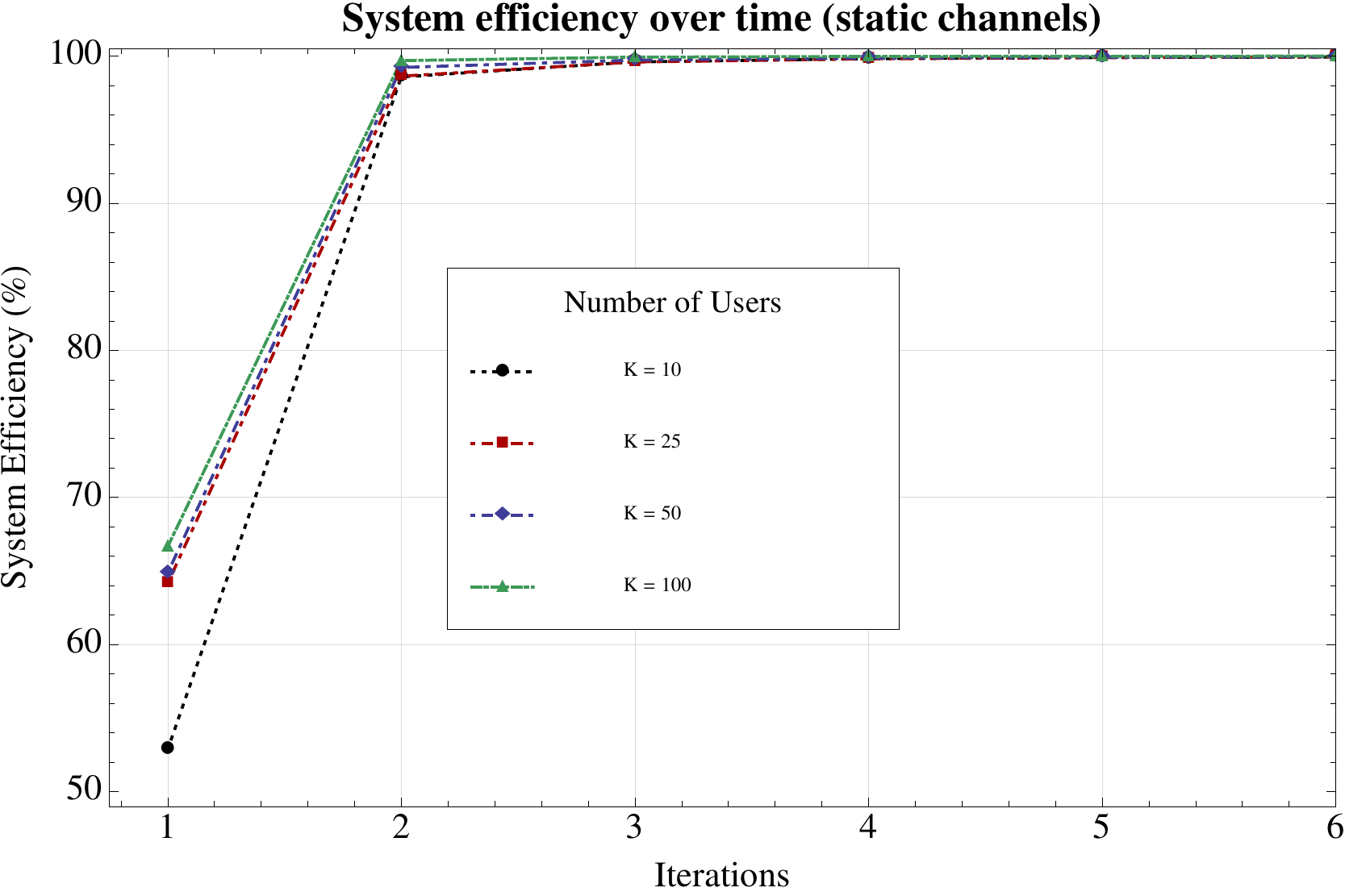}}
\hfill
\subfigure
[Discounted exponential learning vs. water-filling for $K=25$ users.]
{\label{fig:WF}
\includegraphics[width=.48\textwidth]{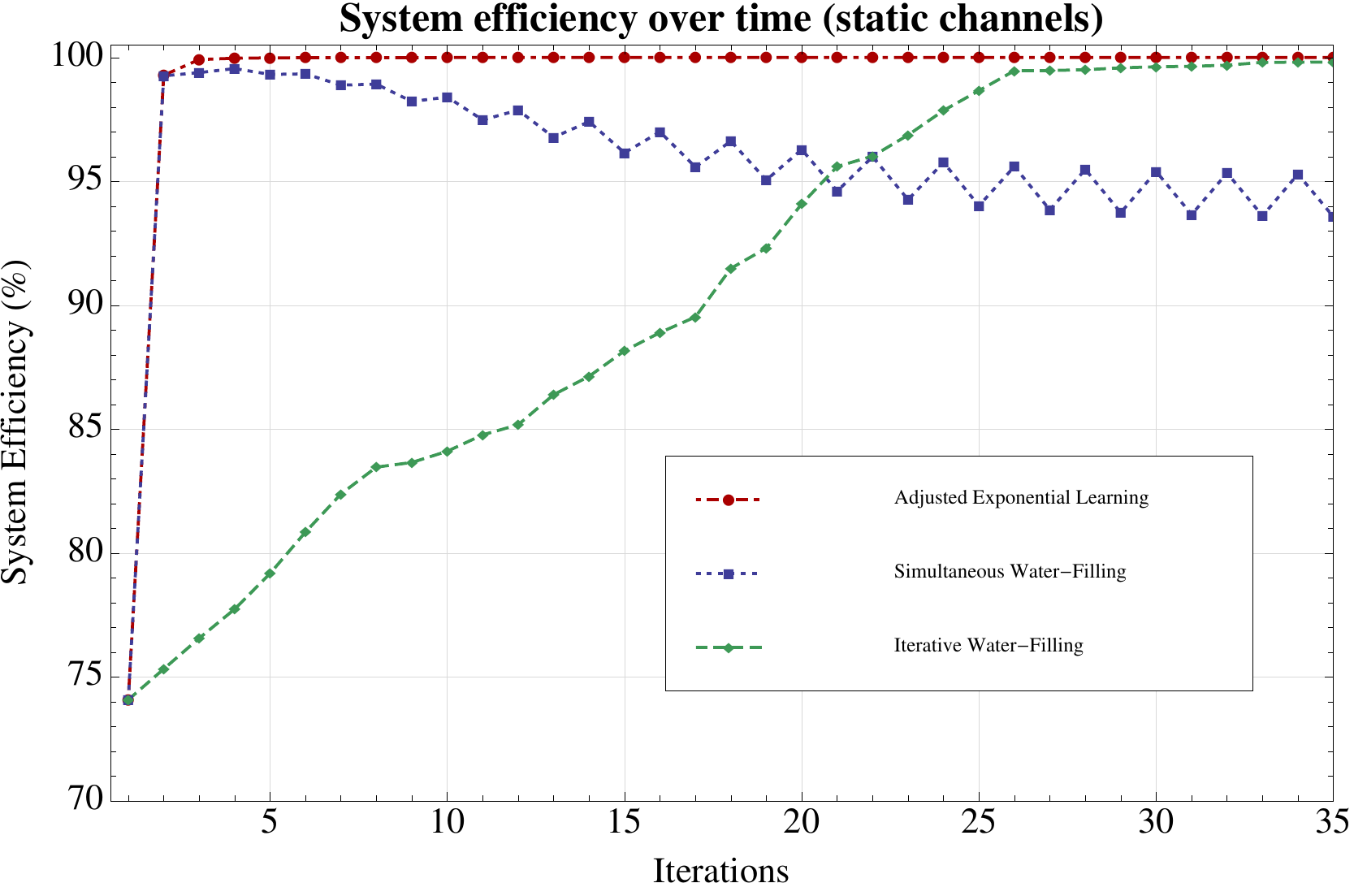}}%
\caption{The convergence speed of \acl{DXL} with \aclp{SU} as a function of the number of users.}
\label{fig:convspeed}
\end{figure*}

The robustness of \acl{DXL} is investigated further in Fig.~\ref{fig:noise} where we simulate an uplink \ac{MIMO} system consisting of $K=25$ transmitters with imperfect \ac{CSI} and noisy measurements at the receiver.
For simplicity, we modeled these errors as additive \acs{iid} zero-mean Gaussian perturbations to the matrices $\bV_{k} = \bH_{k} \bW^{-1} \bH_{k}^{\dag}$ that are used in the update step of \ac{SU}, and the strength of these perturbations was controlled by the ratio of the errors' standard deviation to the matrix norm of $\bV_{k}$ (so a relative error level of $\eta = 100\%$ means that the measurement error has the same magnitude as the measured variable).
We then plotted the efficiency ratio achieved by Algorithm \ref{alg:DXL-SU} over time for average error levels of $\eta = 15\%$ and $\eta = 100\%$;
for benchmarking purposes, we then also ran the iterative and simultaneous water-filling algorithms with the same relative error levels (and noise realizations).
As can be seen in Fig.~\ref{fig:noise}, the performance of water-filling methods remains acceptable at low error levels (attaining 90\textendash95\% of the system's sum capacity), but when the measurement noise gets higher, water-filling offers no perceptible advantage over the users' initial choice of covariance matrices.
By contrast, \acl{DXL} retains its convergence properties even for relative error levels as high as $100\%$ \textendash\ though, of course, the algorithm's convergence speed is negatively impacted.

\begin{figure*}[t]
%\vspace{-10pt}
\centering
\small
\subfigure
[Learning with a relative error level of 15\%.]
{\label{fig:noise.low}
\includegraphics[width=.48\textwidth]{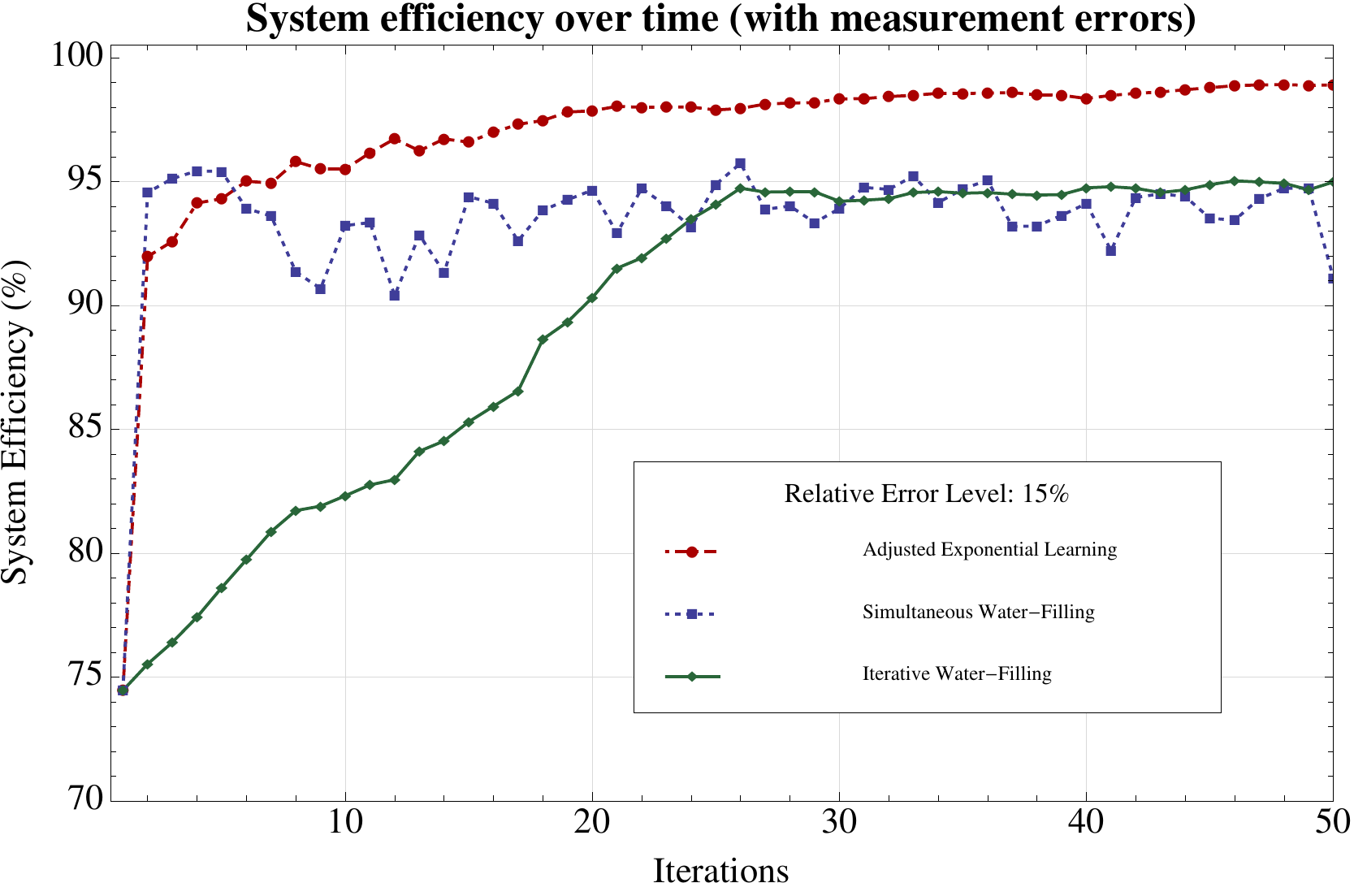}}
\hfill
\subfigure
[Learning with a relative error level of 100\%.]
{\label{fig:noise.high}
\includegraphics[width=.48\textwidth]{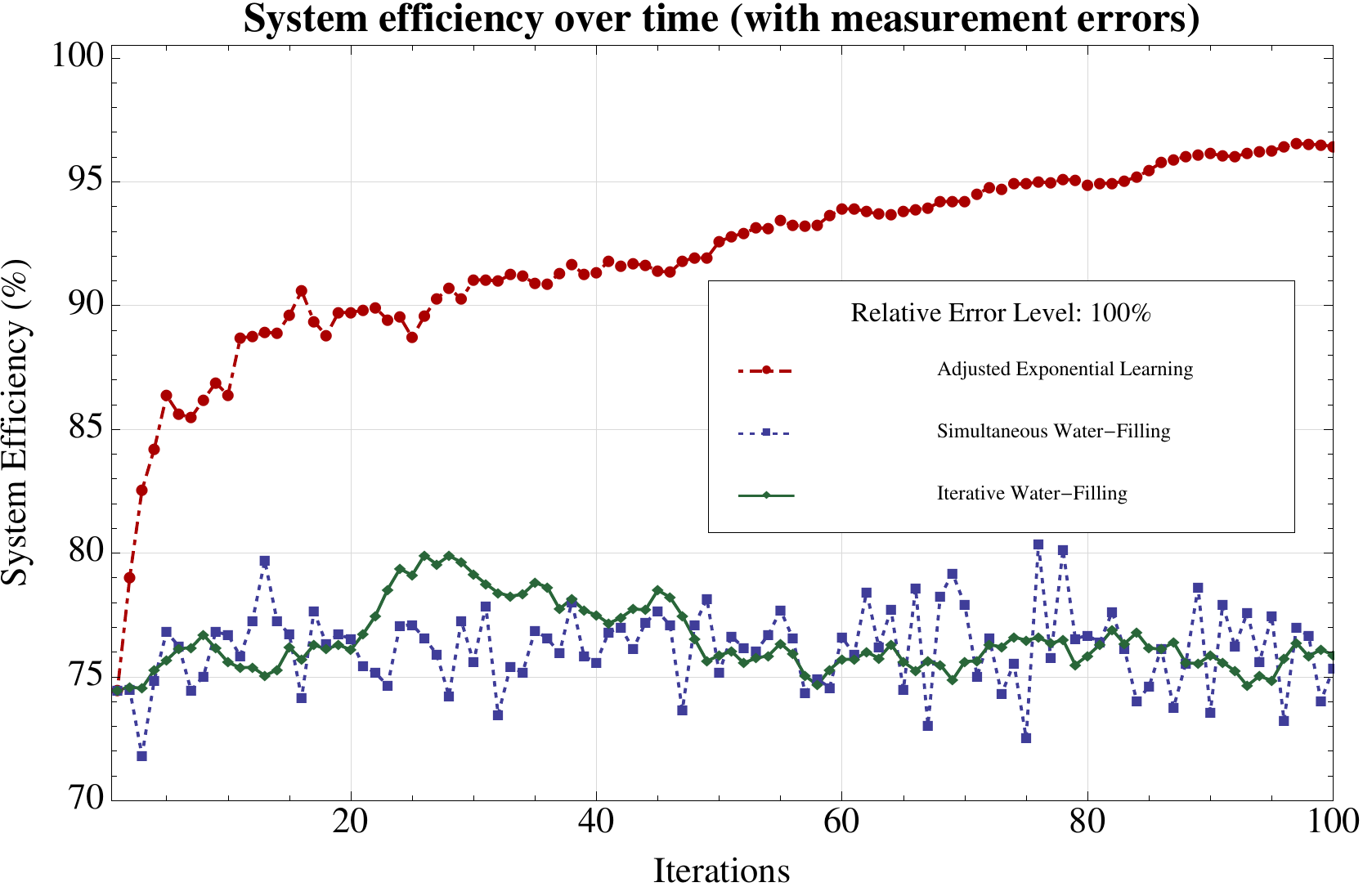}}%
\caption{The robustness of entropy-driven learning in the presence of measurement errors:
in contrast to water-filling methods, the entropy-driven learning attains the channel's sum capacity, even in the presence of very high measurement errors.}
\label{fig:noise}
\end{figure*}

Finally, to account for changing channel conditions, we also plotted the performance of Algorithm \ref{alg:DXL-SU} for non-static channels following the well-known Jakes model of Rayleigh fading \cite{Cal++07}.
More precisely, in Fig.~\ref{fig:tracking}, we consider a \ac{MIMO} uplink system with $3$ receive antennas and $K = 10$ users with $2$ antennas each, transmitting at a frequency of $f = 2\,\mathrm{GHz}$ and with average pedestrian velocities of $v = 5\,\mathrm{km/h}$ (corresponding to a channel coherence time of $108$ ms).
We then ran Algorithm \ref{alg:DXL-SU} with an update period of $\delta = 3\,\mathrm{ms}$, and we plotted the achieved sum rate $\pot(t)$ at time $t$ versus the maximum attainable sum rate $\pot_{\max}(t)$ given the channel matrices $\bH_{k}(t)$ at time $t$ (and versus the ``uniform'' sum rate that users could achieve by spreading their power uniformly over their antennas).
As a result of its high convergence speed, Algorithm \ref{alg:DXL-SU} tracks the system's sum capacity remarkably well, despite the changing channel conditions.
Moreover, the sum rate difference between the learned transmit covariance profile and the uniform one shows that this tracking is not an artifact of the system's sum capacity always being within a narrow band of its (evolving) maximum, but a real consequence of learning.

\begin{figure}[t]
\centering
\small
\includegraphics[width=.8\textwidth]{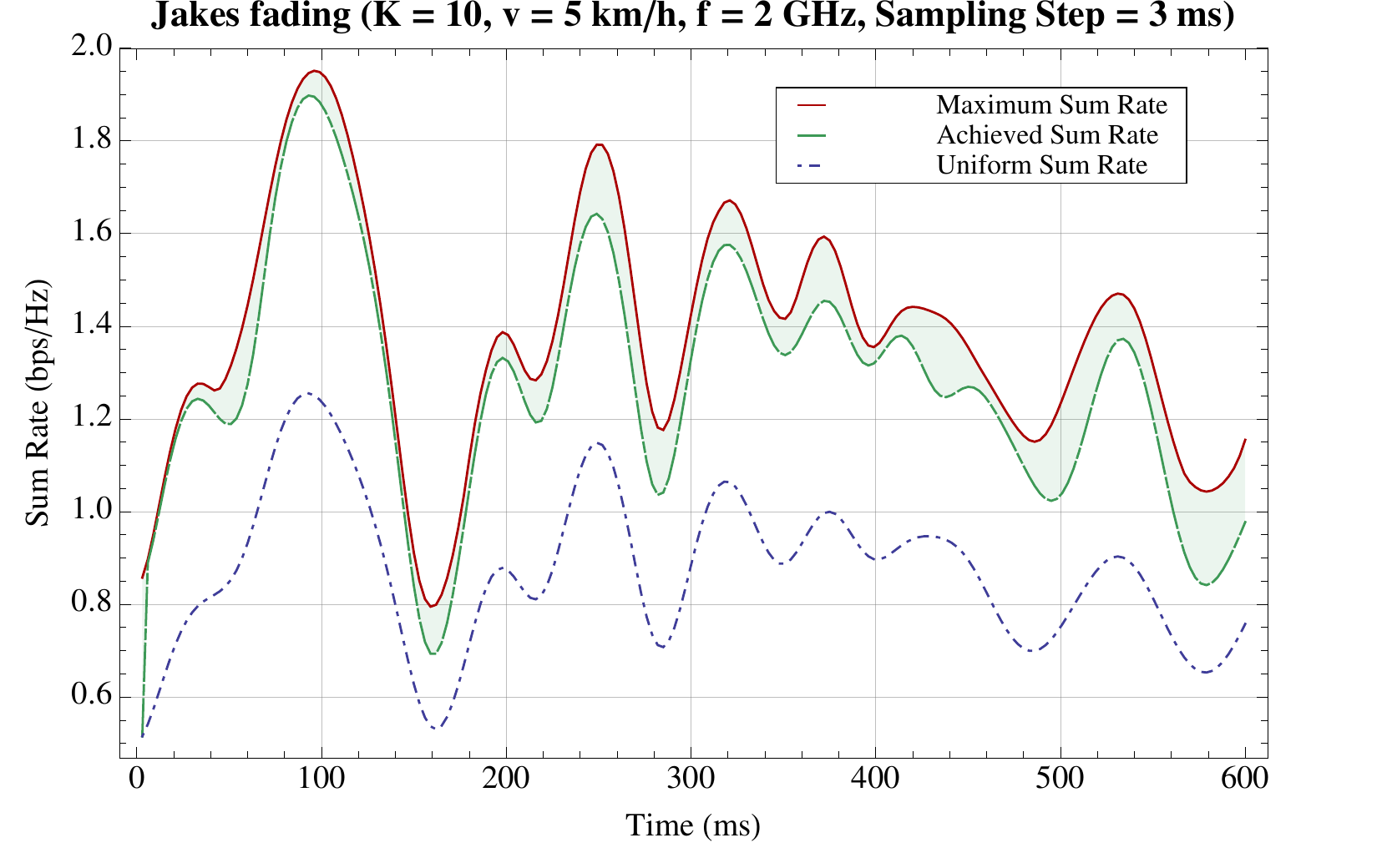}
\caption{The performance of entropy-driven learning under changing channel conditions (following the Jakes model for Rayleigh fading with parameters indicated in the figure caption).}
\label{fig:tracking}
\end{figure}

%----------------------------------------------------------------------
%%% APPLICATIONS
%----------------------------------------------------------------------
\section{Conclusions}
\label{sec.conclusions}
%----------------------------------------------------------------------
%%% CONCLUSIONS
%----------------------------------------------------------------------
% !TEX root = ./peis-arxiv.tex

In this paper, we introduced a class of distributed algorithms based
on a regularized variant of matrix exponential learning for
stochastic semidefinite programming with applications to robust spectrum management in multi-user \ac{MIMO} systems.
This  adjustment  of classical exponential learning generates a discrete-time algorithm
which tracks the continuous-time dynamics of adjusted exponential
learning and converges arbitrarily close to the system's optimum configuration.
% In contrast to traditional water-filling methods, the algorithm's
% convergence speed can be controlled by tuning the agents' learning
% rate, so entropy-driven learning converges within a few iterations,
% even for large numbers of agents.
Thanks to this adjustment term, the algorithm remains robust in the presence of stochastic perturbations:
it converges even when the agents only have imperfect (or delayed)
information  at their disposal, or even if they update in a fully asynchronous manner and independently of one another.

The optimization method of adjusted exponential learning method actually applies to a wide range of semidefinite problems;
we focused here on the \ac{MIMO} \ac{MAC} where our approach dominates
classical water filling techniquesboth in terms of speed of
convergence and robustness to random perturbations.

In the case of multi-user MIMO systems, it out-performs  traditional
water-filling methods, both in terms of robustness to imperfect signal
measurements and speed of convergence: in
practice, the algorithme converges within a few iterations, even for
large numbers of antennas.

%----------------------------------------------------------------------
%%% APPENDIX
%----------------------------------------------------------------------
% \appendix[computational issues]
% \label{app}
% \input{Appendix}

%*************************************************************
%*****    BIBLIOGRAPHY
%*************************************************************
\bibliographystyle{siam}
\footnotesize
\setlength{\bibsep}{0pt}
\bibliography{IEEEabrv,Bibliography}

\end{document}